\DeclareMathOperator\erf{erf}
\theoremstyle{definition}
\newtheorem{theorem}{Theorem}[section]
\theoremstyle{remark}
\newtheorem{remark}{Remark}
\theoremstyle{theorem}
\newtheorem{proposition}{Proposition}[section]
\title{{\sc {\Large Simultaneous determination of two unknown thermal coefficients
through a mushy zone model with an overspecified convective boundary condition}}}
\author[1,2]{Andrea N. Ceretani\thanks{aceretani@austral.edu.ar}}
\author[1]{Domingo A. Tarzia\thanks{dtarzia@austral.edu.ar}}
\affil[1]{{\small CONICET - Depto. Matem\'atica, Facultad de Ciencias Empresariales, Universidad Austral, Paraguay 1950, S2000FZF Rosario, Argentina.}}
\affil[2]{{\small Depto. de Matem\'atica, Facultad de Ciencias Exactas, Ingenier\'ia y Agrimensura, Universidad Nacional de Rosario, Pellegrini 250, S2000BTP Rosario, Argentina.}}
\date{}
\begin{document}  
\maketitle

\begin{abstract}
The simultaneous determination of two unknown thermal coefficients for a semi-infinite material under a phase-change process with a mushy zone according to the Solomon-Wilson-Alexiades model is considered. The material is assumed to be initially liquid at its melting temperature and it is considered that the solidification process begins due to a heat flux imposed at the fixed face. The associated free boundary value problem is overspecified with a convective boundary condition with the aim of the simultaneous determination of the temperature of the solid region, one of the two free boundaries of the mushy zone and two thermal coefficients among the latent heat by unit mass, the thermal conductivity, the mass density, the specific heat and the two coefficients that characterize the mushy zone. The another free boundary of the mushy zone, the bulk temperature and the heat flux and heat transfer coefficients at the fixed face are assumed to be known. According to the choice of the unknown thermal coefficients, fifteen phase-change problems arise. The study of all of them is presented and explicit formulae for the unknowns are given, beside necessary and sufficient conditions on data in order to obtain them. Formulae for the unknown thermal coefficients, with their corresponding restrictions on data, are summarized in a table.
\end{abstract}

{\bf Keywords}: Phase Change, Convective Condition,
Lam\'e-Clapeyron-Stefan Problem, Mushy Zone, 
Solomon-Wilson-Alexiades Model, Unknown Thermal Coefficients.\\

{\bf 2010 AMS subjet classification}: 35R35 - 35C06 - 80A22

\section{Introduction}
Heat transfer problems with a phase-change such as melting and freezing have been studied in the last century due to their wide scientific and technological applications \cite{AlSo1993,Ca1984,CaJa1959, Cr1984, Fa2005, Gu2003, Hi1987, Lu1991, Ta1986}. In particular, inverse problems related to the determination of thermal coefficients has been attracted many scientists because they are often ill-posed problems \cite{CaDu1973,BaZaFr2010,BoMh2012,DaMiUp2009,ZuLiJiGu2009,ErLeHa2013,
Go2013,HaIsLeKe2013,HaPe2013,HeNoSlWiZi2013,HuLeIv2014,InOnNaKu2007,
KaIs2012,KeIs2012,LaZh1995,LuHaJiZhWuLi2015,SaNaTa2008,PeZhLiYaGu2014,SaTa2011,
Ta1982,Ta1983,Ta1987,YaYuDe2008,YaZh2006}.

In our recent work \cite{CeTa2015-a}, we studied the determination of one unknown thermal coefficient for a semi-infinite material under a solidification process due to a heat flux imposed at the fixed boundary when the existence of a mushy zone it is considered following the Solomon-Wilson-Alexiades' model \cite{SoWiAl1982} and the associated free boundary value problem is overspecified with a convective boundary condition \cite{Ta2015}. In this paper we study the simultaneous determination of two unknown thermal coefficients for the same phase-change process when some additional information on the physical phenomena is given. 

For the convenience of the reader, we repeat the main characteristics of the phase-change process considered in \cite{CeTa2015-a,SoWiAl1982}. The material is assumed to be initially liquid at a melting temperature of $0^\circ$C and the existence of three different regions in the solidification process is considered \cite{SoWiAl1982, Ta1987}:
\begin{enumerate}
\item liquid region at temperature $T(x,t)=0$:
$D_l=\left\{(x,t)\in\mathbb{R}^2/\,x>r(t),\,t>0\right\}$,
\item solid region at temperature $T(x,t)<0$:
$D_s=\left\{(x,t)\in\mathbb{R}^2/\,0<x<s(t),\,t>0\right\}$,
\item mushy region at temperature $T(x,t)=0$:
$D_p=\left\{(x,t)\in\mathbb{R}^2/\,s(t)<x<r(t),\,t>0\right\}$,
\end{enumerate}
where $x=s(t)$ and $x=r(t)$ represent the free boundaries of the mushy zone, and $T=T(x,t)$ represents the temperature of the material. The mushy zone is considered as isothermal and the following assumptions on its structure are made:
\begin{enumerate}
\item the material contains a fixed portion of the total latent heat per unit mass (see condition (\ref{3-DC}) below),
\item its width is inversely proportional to the gradient of temperature (see condition (\ref{4-DC}) below).
\end{enumerate}
Finally, we recall that all of the thermal coefficients involved in the solidification process are assumed to be constants, where the bulk temperature $-D_\infty<0$ and the heat flux and heat transfer coefficients $q_0>0$ and $h_0>0$, respectively, at the fixed face are assumed to be known.

In this paper, we consider that we also know the evolution in time of one of the two free boundaries of the mushy zone. More precisely, we assume that the free boundary $x=s(t)$ has the form:
\begin{equation}\label{ss-DC}
s(t)=2\sigma\sqrt{t},\hspace{0.5cm}t>0\text{,}
\end{equation}
where $\sigma>0$ is a known coefficient. Thanks to this additional information on the physical phenomena, we will be able to determine simultaneously the temperature $T=T(x,t)$ of the solid region, the free boundary $x=r(t)$ and two unknown thermal coefficients among the latent heat by unit mass $l>0$, the thermal conductivity $k>0$, the mass density $\rho>0$, the specific heat $c>0$, and the two coefficients $0<\epsilon<1$ and $\gamma>0$ that characterize the mushy zone, by solving the following overspecified free boundary value problem:
\begin{align}
\label{1-DC}&\rho c T_t(x,t)-kT_{xx}(x,t)=0&0<x<s(t),\hspace{0.25cm}&t>0\\
\label{2-DC}&T(s(t),t)=0& &t>0\\
\label{3-DC}&kT_x(s(t),t)=\rho l[\epsilon\dot{s}(t)+(1-\epsilon)\dot{r}(t)]& &t>0\\
\label{4-DC}&T_x(s(t),t)(r(t)-s(t))=\gamma& &t>0\\
\label{5-DC}&r(0)=0\\
\label{6-DC}&kT_x(0,t)=\frac{q_0}{\sqrt{t}}& &t>0\\
\label{7-DC}&kT_x(0,t)=\frac{h_0}{\sqrt{t}}(T(0,t)+D_\infty)& &t>0
\end{align}

Since inverse Stefan problems are usually ill posed, it is expected that restrictions on data has to be set in order to obtain solutions to problem (\ref{1-DC})-(\ref{7-DC}). The goal of this paper is to obtain necessary and sufficient conditions on data for the fifteen phase-change problems (\ref{1-DC})-(\ref{7-DC}) that arise according to the choice of the unknown thermal coefficients, under which solutions can be obtained. Moreover, we also want to obtain those solutions explicitly.

The organization of the paper is as follows: first (Sect. \ref{sec:ExpSol-DC}) we prove a preliminary result where necessary and sufficient conditions on data for the phase-change process (\ref{1-DC})-(\ref{7-DC}) are given in order to obtain the temperature $T=T(x,t)$ and the unknown free boundary $x=r(t)$. Then (Sect. \ref{sec:ExpForm-DC}), based on this preliminary result, we present and solve the fifteen different cases for the phase-change process (\ref{1-DC})-(\ref{7-DC}) corresponding to each possible choice of the two unknown thermal coefficients among $l$, $k$, $\rho$, $c$, $\epsilon$ and $\gamma$. Under certain restrictions on data, we prove that there are twelve cases where it is possible to find a unique explicit solution and that there are infinite explicit solutions for the remainder three cases. At the end, explicit formulae for the unknown thermal coefficients, with their corresponding restrictions on data, are summarize in Table \ref{tb:1-DC}.

\section{Explicit solution to the phase-change process}\label{sec:ExpSol-DC}
The following theorem represents the base on which the work of this article will be structured.
\begin{theorem}\label{th:0-DC}
The solution to problem (\ref{1-DC})-(\ref{7-DC}) is given by:
\begin{align}
\label{sT-DC}&T(x,t)=-\frac{q_0\sqrt{\alpha t}}{k}\erf
\left(\frac{\sigma}{\sqrt{\alpha}}\right)
\left[1-
\frac{\erf\left(\frac{x}{2\sqrt{\alpha t}}\right)}
{\erf\left(\frac{\sigma}{\sqrt{\alpha}}\right)}\right]&0<x<s(t),\,&t>0\\
\label{sr-DC}&r(t)=\left[\frac{\gamma k\exp{(\sigma^2/\alpha)}}{q_0}+2\sigma
\right]\sqrt{t}& &t>0
\end{align}
if and only if the physical parameters satisfy the following two equations:
\begin{align}
\label{eq:1-DC}&\frac{q_0}{\rho l}=\left[\sigma+\frac{\gamma k (1-\epsilon)
\exp{(\sigma^2/\alpha)}}{2q_0}\right]\exp{(\sigma^2/\alpha)}\\
\label{eq:2-DC}&\erf\left(\frac{\sigma}{\sqrt{\alpha}}\right)=
\frac{kD_\infty}{q_0\sqrt{\alpha\pi}}\left(1-\frac{q_0}{h_0D_\infty}\right)
\end{align}
where the coefficient $\alpha$, defined by:
\begin{equation}
\alpha=\frac{k}{\rho c}\text{,}
\end{equation}
is the thermal diffusivity.
\end{theorem}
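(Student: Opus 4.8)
The plan is to solve the heat equation in the solid region under the given boundary data and then impose conditions (\ref{3-DC})--(\ref{4-DC}) and (\ref{6-DC})--(\ref{7-DC}) to pin down the temperature, the free boundary $x=r(t)$, and the two compatibility relations (\ref{eq:1-DC})--(\ref{eq:2-DC}). First I would look for a similarity solution of (\ref{1-DC}) in the domain $0<x<s(t)=2\sigma\sqrt{t}$ that vanishes on $x=s(t)$ by virtue of (\ref{2-DC}); since $\erf\!\left(x/(2\sqrt{\alpha t})\right)$ solves the heat equation and equals $\erf(\sigma/\sqrt\alpha)$ on the free boundary, the natural ansatz is $T(x,t)=A\sqrt{t}\left[\erf\!\left(x/(2\sqrt{\alpha t})\right)-\erf(\sigma/\sqrt\alpha)\right]$ for a constant $A$ to be determined. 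Plugging this ansatz into the flux condition (\ref{6-DC}), which reads $kT_x(0,t)=q_0/\sqrt t$, and using $\partial_x\erf\!\left(x/(2\sqrt{\alpha t})\right)\big|_{x=0}=1/\sqrt{\pi\alpha t}$, fixes $A$ and yields (\ref{sT-DC}) after rewriting; this is essentially the same computation as in \cite{CeTa2015-a}.

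Next I would determine $r(t)$. From (\ref{4-DC}), $r(t)=s(t)+\gamma/T_x(s(t),t)$, so I need $T_x(s(t),t)$: differentiating (\ref{sT-DC}) gives $T_x(x,t)=-\frac{q_0}{k}\,e^{-x^2/(4\alpha t)}\big/\erf(\sigma/\sqrt\alpha)\cdot(\text{normalization})$, and evaluating at $x=2\sigma\sqrt t$ produces $T_x(s(t),t)=-\frac{q_0}{k}e^{\sigma^2/\alpha}\cdot(\dots)$; substituting into $r(t)=2\sigma\sqrt t+\gamma/T_x(s(t),t)$ gives (\ref{sr-DC}) once signs and the factor $\exp(\sigma^2/\alpha)$ are tracked carefully, and this automatically satisfies $r(0)=0$, i.e. (\ref{5-DC}). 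The Stefan-type condition (\ref{3-DC}), $kT_x(s(t),t)=\rho l[\epsilon\dot s(t)+(1-\epsilon)\dot r(t)]$, then becomes an identity among constants: $\dot s(t)=\sigma/\sqrt t$ and $\dot r(t)$ is read off from (\ref{sr-DC}), so (\ref{3-DC}) holds for all $t>0$ \emph{if and only if} the constant coefficients match, which is exactly equation (\ref{eq:1-DC}). Finally, the overspecified convective condition (\ref{7-DC}) combined with (\ref{6-DC}) forces $T(0,t)+D_\infty=q_0/h_0$, i.e. $T(0,t)=q_0/h_0-D_\infty$ (a constant, consistently with $T(0,t)$ from (\ref{sT-DC}) being proportional to $\sqrt t\cdot\sqrt t^{-1}$... actually one must check the $\sqrt t$ cancels); imposing this on (\ref{sT-DC}) at $x=0$ gives (\ref{eq:2-DC}) after solving for $\erf(\sigma/\sqrt\alpha)$.

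The argument is an equivalence, so both directions must be handled: the computation above shows that (\ref{sT-DC})--(\ref{sr-DC}) together with (\ref{eq:1-DC})--(\ref{eq:2-DC}) are \emph{necessary}; for \emph{sufficiency} I would verify by direct substitution that, assuming (\ref{eq:1-DC})--(\ref{eq:2-DC}), the pair (\ref{sT-DC})--(\ref{sr-DC}) indeed satisfies every equation (\ref{1-DC})--(\ref{7-DC}). I expect the only real subtlety to be bookkeeping: making sure the $\sqrt t$ powers cancel correctly in (\ref{7-DC}) (so that a genuinely $t$-independent relation emerges rather than an inconsistency), and tracking the sign of $T_x$ and the factor $\exp(\sigma^2/\alpha)$ through conditions (\ref{3-DC}) and (\ref{4-DC}); there is no analytic difficulty beyond these careful but routine manipulations, since the similarity structure reduces the PDE problem to algebra.
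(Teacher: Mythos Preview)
Your overall strategy---assume a similarity solution of the heat equation satisfying (\ref{2-DC}), use (\ref{6-DC}) to fix the amplitude, read off $r(t)$ from (\ref{4-DC}), and then extract the two compatibility relations from (\ref{3-DC}) and from (\ref{6-DC})--(\ref{7-DC})---is exactly the route the paper takes. The paper simply posits $T(x,t)=A+B\,\erf\bigl(x/(2\sqrt{\alpha t})\bigr)$ and $r(t)=2\mu\sqrt{\alpha t}$, then imposes (\ref{2-DC}), (\ref{3-DC}), (\ref{4-DC}), (\ref{6-DC}), (\ref{7-DC}) to obtain $A$, $B$, $\mu$ and the two equations (\ref{eq:1-DC})--(\ref{eq:2-DC}).

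There is, however, a genuine error in your ansatz. You write
\[
T(x,t)=A\sqrt{t}\Bigl[\erf\!\bigl(x/(2\sqrt{\alpha t})\bigr)-\erf(\sigma/\sqrt{\alpha})\Bigr],
\]
but this function does \emph{not} satisfy the heat equation (\ref{1-DC}): the product $\sqrt{t}\,\erf\bigl(x/(2\sqrt{\alpha t})\bigr)$ picks up an extra term $\tfrac{1}{2\sqrt t}\erf(\cdot)$ under $\partial_t$ that is not cancelled by $\alpha\partial_{xx}$. The same $\sqrt t$ factor also breaks condition (\ref{6-DC}): with your ansatz $T_x(0,t)=A/\sqrt{\pi\alpha}$ is constant in $t$, so $kT_x(0,t)=q_0/\sqrt t$ cannot hold. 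The correct ansatz has \emph{constant} coefficients,
\[
T(x,t)=A+B\,\erf\!\bigl(x/(2\sqrt{\alpha t})\bigr),
\]
and then $kT_x(0,t)=kB/\sqrt{\pi\alpha t}=q_0/\sqrt t$ gives $B=q_0\sqrt{\pi\alpha}/k$, while (\ref{2-DC}) gives $A=-B\,\erf(\sigma/\sqrt\alpha)$. You were likely misled by the prefactor $\sqrt{\alpha t}$ appearing in the displayed formula (\ref{sT-DC}); in fact the paper's own proof shows this prefactor should be $\sqrt{\alpha\pi}$, so (\ref{sT-DC}) as printed contains a typo. Once you drop the spurious $\sqrt t$, your worry about whether ``the $\sqrt t$ cancels'' in (\ref{7-DC}) disappears: $T(0,t)=A$ is genuinely constant, and equating it to $q_0/h_0-D_\infty$ gives (\ref{eq:2-DC}) directly. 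The rest of your outline then goes through without change.
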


\begin{proof}
The free boundary value problem (\ref{1-DC})-(\ref{7-DC}) has the solution \cite{SoWiAl1982,Ta1987,TaToAppear}:
\begin{align}
\label{s1-DC}&T(x,t)=A+B\erf\left(\frac{x}{2\sqrt{\alpha t}}\right)& 0<x<s(t),\,\,&t>0\\
\label{s2-DC}&r(t)=2\mu\sqrt{\alpha t}& &t>0
\end{align}
where coefficients $A$, $B$ and $\mu$ have to be found.

By imposing conditions (\ref{2-DC})-(\ref{4-DC}), (\ref{6-DC}) and (\ref{7-DC}) on (\ref{s1-DC})-(\ref{s2-DC}) we have that:
\begin{equation}
 A=-\frac{q_0\sqrt{\alpha \pi}}{k}\erf\left(\frac{\sigma}{\sqrt{\alpha}}
 \right)\text{,}\hspace{1cm}
 B=\frac{q_0\sqrt{\alpha \pi}}{k}
 \hspace{0.5cm}\text{and} \hspace{0.5cm}
 \mu=\frac{\gamma k\exp{(\sigma^2/\alpha)}}{2q_0\sqrt{\alpha}}+
 \frac{\sigma}{\sqrt{\alpha}}\text{,}
\end{equation}
which corresponds to solution (\ref{sT-DC})-(\ref{sr-DC}),
and that the physical parameters must satisfy equations (\ref{eq:1-DC}) and (\ref{eq:2-DC}).
\end{proof}

Hence, from Theorem \ref{th:0-DC}, we have that there is an equivalence between solving the free boundary value problem (\ref{1-DC})-(\ref{7-DC}) with two unknown thermal coefficients and solving the system of equations (\ref{eq:1-DC})-(\ref{eq:2-DC}) for the same two unknown thermal coefficients as in problem (\ref{1-DC})-(\ref{7-DC}). 

\section{Explicit formulae for the unknown thermal coefficients} \label{sec:ExpForm-DC}
In this section we present and solve fifteen different cases for the phase-change process (\ref{1-DC})-(\ref{7-DC}) according to the choice of the two unknown thermal coefficients. With the aim of organizing our work, we classify each problem by making reference to the coefficients which is necessary to know in order to solve it (see Theorem \ref{th:0-DC}):

\vspace*{0.3cm}
\begin{adjustwidth}{2cm}{}
\noindent Case 1: Determination of $\epsilon$ and $\gamma$,\hspace{1cm}
Case 2: Determination of $\epsilon$ and $l$,

\vspace*{0.5cm}
\noindent Case 3: Determination of $\gamma$ and $l$,\hspace{1cm}
Case 4: Determination of $\epsilon$ and $k$,

\vspace*{0.5cm}
\noindent Case 5: Determination of $\epsilon$ and $\rho$,\hspace{1cm}
Case 6: Determination of $\epsilon$ and $c$,

\vspace*{0.5cm}
\noindent Case 7: Determination of $\gamma$ and $k$,\hspace{1cm}
Case 8: Determination $\gamma$ and $\rho$,

\vspace*{0.5cm}
\noindent Case 9: Determination of $\gamma$ and $c$,\hspace{1cm}
Case 10: Determination of $l$ and $k$,

\vspace*{0.5cm}
\noindent Case 11: Determination of $l$ and $\rho$,\hspace{1cm}
Case 12: Determination of $l$ and $c$,
\end{adjustwidth}

\begin{adjustwidth}{2cm}{}
\noindent Case 13: Determination of $k$ and $\rho$,\hspace{1cm}
Case 14: Determination of $k$ and $c$,

\vspace*{0.5cm}
\noindent and

\vspace*{0.5cm}
\noindent Case 15: Determination of $\rho$ and $c$.
\end{adjustwidth}
\vspace*{0.3cm}

Moreover, we introduce several functions and parameters which are named with an index according to the number of the case where they arise for the first time.

\begin{theorem}[Case 1: determination of $\epsilon$ and $\gamma$]\label{th:1-DC}
If we consider the phase-change process (\ref{1-DC})-(\ref{7-DC}) with unknown thermal coefficients $\epsilon$ and $\gamma$, then it has infinite  solutions given by (\ref{sT-DC})-(\ref{sr-DC}) with:
\begin{equation}\label{gamma1-DC}
\gamma=\frac{2q_0\sigma}{k(1-\epsilon)}\left(\frac{q_0}{\rho l\sigma}\exp{(-\sigma^2/\alpha)}-1\right)\exp{(-\sigma^2/\alpha)}
\end{equation}
and any $\epsilon\in(0,1)$, if and only if the remainder physical parameters satisfy condition (\ref{eq:2-DC}) and the following inequality:
\begin{equation}\label{R1-DC}
0<\frac{q_0}{\rho l\sigma}\exp{(-\sigma^2/\alpha)}-1\text{.}\tag{R1}
\end{equation}
\end{theorem}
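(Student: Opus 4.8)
The plan is to work directly from the equivalence established in Theorem~\ref{th:0-DC}: solving the free boundary problem (\ref{1-DC})--(\ref{7-DC}) with $\epsilon$ and $\gamma$ unknown is the same as solving the system (\ref{eq:1-DC})--(\ref{eq:2-DC}) for those two unknowns, with all other physical parameters regarded as data. First I would observe that equation (\ref{eq:2-DC}) does not involve $\epsilon$ or $\gamma$ at all — it is a constraint purely on the data $q_0$, $h_0$, $D_\infty$, $k$, $\alpha$. Hence it must be imposed as a compatibility condition on the remaining parameters; this is exactly the hypothesis (\ref{eq:2-DC}) appearing in the statement, and there is nothing to solve there.

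The substance is then to analyze (\ref{eq:1-DC}) as a single equation in the two unknowns $\epsilon \in (0,1)$ and $\gamma > 0$. I would rewrite (\ref{eq:1-DC}) by isolating the bracketed term: dividing both sides by $\exp(\sigma^2/\alpha)$ gives
\begin{equation*}
\frac{q_0}{\rho l}\exp{(-\sigma^2/\alpha)} = \sigma + \frac{\gamma k (1-\epsilon)\exp{(\sigma^2/\alpha)}}{2q_0},
\end{equation*}
and then solving for $\gamma$ yields
\begin{equation*}
\gamma = \frac{2q_0}{k(1-\epsilon)\exp{(\sigma^2/\alpha)}}\left(\frac{q_0}{\rho l}\exp{(-\sigma^2/\alpha)} - \sigma\right),
\end{equation*}
which, after factoring $\sigma$ out of the parenthesis, is precisely formula (\ref{gamma1-DC}). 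So for \emph{each} fixed $\epsilon \in (0,1)$ there is exactly one $\gamma$ satisfying (\ref{eq:1-DC}); since there are infinitely many admissible $\epsilon$, the problem has infinitely many solutions, each given by (\ref{sT-DC})--(\ref{sr-DC}) with this $\gamma$ and the chosen $\epsilon$.

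The remaining point — and the only place any genuine condition on the data enters beyond (\ref{eq:2-DC}) — is to ensure that the $\gamma$ produced by (\ref{gamma1-DC}) is actually \emph{positive}, as required of a physical coefficient (and that $1-\epsilon > 0$, which holds automatically since $\epsilon \in (0,1)$). In (\ref{gamma1-DC}) the factors $2q_0\sigma$, $k(1-\epsilon)$, and $\exp(-\sigma^2/\alpha)$ are all strictly positive, so the sign of $\gamma$ is the sign of $\dfrac{q_0}{\rho l \sigma}\exp(-\sigma^2/\alpha) - 1$. Requiring this to be positive is exactly inequality (\ref{R1-DC}). I would then note the converse is immediate: if (\ref{eq:2-DC}) and (\ref{R1-DC}) hold, the pair $(\epsilon, \gamma)$ with $\gamma$ from (\ref{gamma1-DC}) satisfies both (\ref{eq:1-DC}) and (\ref{eq:2-DC}), hence by Theorem~\ref{th:0-DC} gives a solution of the form (\ref{sT-DC})--(\ref{sr-DC}). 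There is no real obstacle here; the whole argument is bookkeeping on one scalar equation, and the main thing to be careful about is tracking the exponential factors correctly when algebraically isolating $\gamma$, and verifying that no spurious sign constraints are needed (in particular that $r(t) > s(t)$, i.e. the bracket in (\ref{sr-DC}) exceeds $2\sigma$, which is equivalent to $\gamma > 0$ and therefore again to (\ref{R1-DC})).
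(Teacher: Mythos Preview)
Your proof is correct and follows essentially the same approach as the paper: invoke Theorem~\ref{th:0-DC} to reduce to the system (\ref{eq:1-DC})--(\ref{eq:2-DC}), observe that (\ref{eq:2-DC}) involves neither $\epsilon$ nor $\gamma$ and is therefore a compatibility condition on the data, solve (\ref{eq:1-DC}) for $\gamma$ in terms of a free $\epsilon\in(0,1)$ to obtain (\ref{gamma1-DC}), and finally note that $\gamma>0$ is equivalent to (\ref{R1-DC}). Your write-up is in fact more detailed than the paper's (you show the algebra explicitly and remark on the $r(t)>s(t)$ consistency), but the argument is the same.
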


\begin{proof}
Due to Theorem \ref{th:0-DC}, we have that the phase-change process (\ref{1-DC})-(\ref{7-DC}) has the solution given in (\ref{sT-DC})-(\ref{sr-DC}) if and only if $\epsilon$ and $\gamma$ satisfy equation (\ref{eq:1-DC}) and the remainder physical parameters satisfy condition (\ref{eq:2-DC}). Then, we have from equation (\ref{eq:1-DC}) that $\gamma$ must be given by (\ref{gamma1-DC}) for any $\epsilon\in(0,1)$. To finish the proof, only remains to observe that this coefficient $\gamma$ is positive if and only if inequality (\ref{R1-DC}) holds.
\end{proof}

\begin{theorem}[Case 2: determination of $\epsilon$ and $l$]\label{th:2-DC}
If we consider the phase-change process (\ref{1-DC})-(\ref{7-DC}) with unknown thermal coefficients $\epsilon$ and $l$, then it has infinite solutions given by (\ref{sT-DC})-(\ref{sr-DC}) with:
\begin{equation}\label{l2-DC}
l=\frac{q_0\exp{(-\sigma^2/\alpha)}}{\rho\sigma\left[1+\frac{\gamma k(1-\epsilon)}{2q_0\sigma}\exp{(\sigma^2/\alpha)}\right]}
\end{equation}
and any $\epsilon\in(0,1)$, if and only if the physical parameters $h_0$, $q_0$, $D_\infty$, $\sigma$, $\rho$, $c$ and $k$ satisfy condition (\ref{eq:2-DC}).
\end{theorem}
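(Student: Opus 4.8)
The plan is to mirror the argument used for Case~1 (Theorem~\ref{th:1-DC}) and exploit the equivalence established in Theorem~\ref{th:0-DC}: the phase-change process (\ref{1-DC})-(\ref{7-DC}) with unknowns $\epsilon$ and $l$ admits the solution (\ref{sT-DC})-(\ref{sr-DC}) if and only if the pair $(\epsilon,l)$ satisfies the system (\ref{eq:1-DC})-(\ref{eq:2-DC}). So the whole task reduces to analysing that two-equation system with $\epsilon$ and $l$ regarded as the unknowns.

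First I would observe that equation (\ref{eq:2-DC}) contains neither $\epsilon$ nor $l$ (recall that $\alpha=k/\rho c$ involves only $k$, $\rho$, $c$), hence it places no restriction on the unknowns and simply becomes a compatibility condition on the data $h_0$, $q_0$, $D_\infty$, $\sigma$, $\rho$, $c$, $k$. This already yields the "only if" direction of the stated restriction.

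Next I would turn to equation (\ref{eq:1-DC}), the only one of the two that involves $l$, and solve it for $l$ with $\epsilon$ treated as a free parameter: inverting both sides, moving the factor $\exp(\sigma^2/\alpha)$ into the numerator as $\exp(-\sigma^2/\alpha)$, and factoring $\sigma$ out of the bracket in the denominator gives precisely formula (\ref{l2-DC}). This is a routine algebraic rearrangement and I expect no difficulty here. Since (\ref{eq:1-DC}) determines $l$ uniquely once $\epsilon$ is fixed, each choice of $\epsilon\in(0,1)$ produces a candidate solution, which is the origin of the "infinitely many solutions" conclusion.

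The one point that deserves genuine attention — and the only place where the argument could fail — is verifying that the $l$ given by (\ref{l2-DC}) is an admissible thermal coefficient, namely strictly positive. Here the numerator $q_0\exp(-\sigma^2/\alpha)$ is positive and the denominator $\rho\sigma\bigl[1+\tfrac{\gamma k(1-\epsilon)}{2q_0\sigma}\exp(\sigma^2/\alpha)\bigr]$ is a positive number plus a nonnegative number, using $q_0,\rho,\sigma,\gamma,k>0$ and $0<\epsilon<1$; hence $l>0$ for every $\epsilon\in(0,1)$. Consequently no further inequality on the data is required — in contrast with Case~1, where the analogous formula for $\gamma$ forced the extra restriction (\ref{R1-DC}) — so condition (\ref{eq:2-DC}) alone is both necessary and sufficient. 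This supplies the "if" direction and completes the proof.
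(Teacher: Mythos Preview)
Your proof is correct and follows essentially the same approach as the paper, which simply states that the argument is similar to the proof of Theorem~\ref{th:1-DC}. Your write-up actually supplies more detail than the paper does, in particular the positivity check for $l$ that explains why, unlike Case~1, no extra inequality such as (\ref{R1-DC}) is needed.
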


\begin{proof}
It is similar to the proof of Theorem \ref{th:1-DC}.
\end{proof}

\begin{theorem}[Case 3: determination of $\gamma$ and $l$]\label{th:3-DC}
If we consider the phase-change process (\ref{1-DC})-(\ref{7-DC}) with unknown thermal coefficients $\gamma$ and $l$, then it has infinite solutions given by (\ref{sT-DC})-(\ref{sr-DC}) with any $\gamma>0$ and $l$ given by (\ref{l2-DC}), if and only if the parameters $h_0$, $q_0$, $D_\infty$, $\sigma$, $\rho$, $c$ and $k$ satisfy condition (\ref{eq:2-DC}).
\end{theorem}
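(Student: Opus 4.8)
The plan is to reduce everything to Theorem \ref{th:0-DC} and then perform an elementary algebraic inversion. By that theorem, the free boundary value problem (\ref{1-DC})--(\ref{7-DC}) with unknown thermal coefficients $\gamma$ and $l$ has the solution (\ref{sT-DC})--(\ref{sr-DC}) if and only if $\gamma$ and $l$ satisfy equation (\ref{eq:1-DC}) while the remaining data satisfy equation (\ref{eq:2-DC}). The first thing to notice is that (\ref{eq:2-DC}) contains neither $\gamma$ nor $l$; since $\alpha = k/(\rho c)$, it is purely a compatibility condition on $h_0$, $q_0$, $D_\infty$, $\sigma$, $\rho$, $c$ and $k$, which must therefore be imposed. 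Equation (\ref{eq:1-DC}) is then a single scalar relation between the two unknowns $\gamma$ and $l$, so the system is underdetermined and one expects a one-parameter family of solutions.

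Next I would fix an arbitrary $\gamma > 0$ and solve (\ref{eq:1-DC}) for $l$. Using that $\epsilon$ is here a known datum with $0 < \epsilon < 1$, a direct rearrangement of (\ref{eq:1-DC}) — factoring $\sigma$ out of the bracket and moving the exponential into the numerator — yields precisely formula (\ref{l2-DC}). It then remains only to check that this value of $l$ is physically admissible, i.e.\ strictly positive. In (\ref{l2-DC}) the numerator $q_0\exp(-\sigma^2/\alpha)$ is positive, and in the denominator $\rho\sigma > 0$ while the bracket $1 + \frac{\gamma k(1-\epsilon)}{2q_0\sigma}\exp(\sigma^2/\alpha)$ exceeds $1$ because $\gamma, k, q_0, \sigma > 0$ and $1-\epsilon > 0$. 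Hence $l > 0$ holds automatically for every choice of $\gamma > 0$, so no restriction on the data is needed beyond (\ref{eq:2-DC}), and one obtains infinitely many solutions of the stated form.

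I do not anticipate a real obstacle: the argument is the inversion of (\ref{eq:1-DC}) plus a sign check, entirely parallel to the proof of Theorem \ref{th:1-DC}. The one point worth emphasising — and the sole structural difference from Case 1 — is that the positivity of $l$ in (\ref{l2-DC}) is unconditional once $\gamma > 0$, whereas in Theorem \ref{th:1-DC} requiring $\gamma > 0$ forced the extra inequality (\ref{R1-DC}); here the form of (\ref{l2-DC}) makes any analogue of (\ref{R1-DC}) superfluous.
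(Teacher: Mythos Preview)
Your proposal is correct and follows essentially the same approach as the paper, which simply notes that the proof is similar to that of Theorem \ref{th:1-DC}. Your explicit observation that the positivity of $l$ in (\ref{l2-DC}) is automatic for any $\gamma>0$---hence no analogue of (\ref{R1-DC}) is needed---is exactly the point that distinguishes this case from Case~1.
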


\begin{proof}
It is similar to the proof of Theorem \ref{th:1-DC}.
\end{proof}

\begin{remark}
Let us observe that it follows from the previous three theorems that, under certain conditions for the data of the problem, the phase-change process (\ref{1-DC})-(\ref{7-DC}) corresponding to Cases 1, 2 and 3 has an infinite number of solutions.
\end{remark}

\begin{theorem}[Case 4: determination of $\epsilon$ and $k$]\label{th:4-DC}
If we consider the phase-change process (\ref{1-DC})-(\ref{7-DC}) with unknown thermal coefficients $\epsilon$ and $k$, then it has the solution given by (\ref{sT-DC})-(\ref{sr-DC}) with:
\begin{align}
\label{epsilon4-DC}&\epsilon=1-f_4(\xi)\\
\label{k4-DC}&k=\rho c\left(\frac{\sigma}{\xi}\right)^2
\end{align}
where $\xi$ is the only positive solution to the equation:
\begin{equation}\label{eq:xi4-DC}
g_4(x)=\frac{\sigma\rho cD_\infty}{q_0\sqrt{\pi}}\left(1-\frac{q_0}{h_0D_\infty}\right)\text{,}\tag{E4}
\end{equation}
and the real functions $f_4$ and $g_4$ are defined by:
\begin{equation}\label{f4g4-DC}
f_4(x)=\frac{2q_0}{\gamma\rho c\sigma}\left(\frac{q_0}{\rho l\sigma}\exp{(-x^2)}-1\right)x^2\exp{(-x^2)}
\hspace{0.5cm}\text{and}\hspace{0.5cm}
g_4(x)=x\erf{(x)}\text{,}\hspace{0.5cm}x>0\text{,}
\end{equation}
if and only if the remainder physical parameters satisfy the next three inequalities:
\begin{align}
\label{R2-DC}&0<1-\frac{q_0}{h_0D_\infty}\tag{R2}\\
\label{R3-DC}&0<1-\frac{q_0}{\rho l \sigma}\tag{R3}\\
\label{R4-DC}&1-\frac{q_0}{h_0D_\infty}<
\frac{q_0\sqrt{\pi}}{\sigma\rho cD_\infty}
g_4\left(\sqrt{\ln{\left(\frac{q_0}{\rho l\sigma}\right)}}\right)\tag{R4}
\end{align}
and any of the following three groups of conditions:\\
Group 1: 
\begin{align}
\label{R5-DC}&f_4(\eta)>1\tag{R5}\\
\label{R6-DC}&1-\frac{q_0}{h_0D_\infty}<\frac{q_0\sqrt{\pi}}{\sigma\rho cD_\infty}g_4(\zeta_1)
\hspace{1cm}\text{or}\hspace{1cm}
1-\frac{q_0}{h_0D_\infty}>\frac{q_0\sqrt{\pi}}{\sigma\rho cD_\infty}g_4(\zeta_2)\tag{R6}
\end{align}
\hspace*{2.3cm} where $\zeta_1$ and $\zeta_2$ are the only two positive solutions to the equation:
\begin{equation}\label{eq:z1z24-DC}
f_4(x)=1\text{,}\hspace{0.5cm}
\end{equation}
\hspace*{2.3cm}and $\eta$ is the only positive solution to the equation:
\begin{equation}\label{eq:eta4-DC}
\frac{q_0}{\rho l\sigma}(1-2x^2)=(1-x^2)\exp(x^2)\text{.}
\end{equation}
Group 2:
\begin{align}
\label{R7-DC}&f_4(\eta)=1\tag{R7}\\
\label{R8-DC}&1-\frac{q_0}{h_0D_\infty}\neq\frac{q_0\sqrt{\pi}}{\sigma\rho cD_\infty}g_4(\eta)\tag{R8}
\end{align}
\hspace*{2.3cm}where $\eta$ is the only positive solution to equation (\ref{eq:eta4-DC}).\\
Group 3:
\begin{equation}\label{R9-DC}
f_4(\eta)<1\hspace{3.3cm}\tag{R9}
\end{equation}
\hspace*{2.3cm}where $\eta$ is the only positive solution to equation (\ref{eq:eta4-DC}).
\end{theorem}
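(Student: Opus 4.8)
The plan is to use Theorem \ref{th:0-DC} to replace the free boundary problem by the scalar system (\ref{eq:1-DC})--(\ref{eq:2-DC}), now to be solved for the pair $(\epsilon,k)$, all other parameters being data. The convenient unknown is not $k$ but $\xi:=\sigma/\sqrt{\alpha}=\sigma\sqrt{\rho c/k}$, which ranges over $(0,+\infty)$ as $k$ does, with the decreasing bijection $k=\rho c(\sigma/\xi)^2$; this already gives (\ref{k4-DC}). Substituting $\alpha=(\sigma/\xi)^2$ and using $k/\sqrt{\alpha}=\rho c\sqrt{\alpha}=\rho c\sigma/\xi$, equation (\ref{eq:2-DC}) becomes, after clearing the factor $\xi$, exactly $g_4(\xi)=\frac{\sigma\rho cD_\infty}{q_0\sqrt{\pi}}\bigl(1-\frac{q_0}{h_0D_\infty}\bigr)$, i.e.\ (\ref{eq:xi4-DC}); and a one-line rearrangement of (\ref{eq:1-DC}), together with $\exp(\sigma^2/\alpha)=\exp(\xi^2)$ and $k=\rho c(\sigma/\xi)^2$, yields $1-\epsilon=f_4(\xi)$, i.e.\ (\ref{epsilon4-DC}). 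So the whole question reduces to: find $\xi>0$ solving (\ref{eq:xi4-DC}) with $f_4(\xi)\in(0,1)$, after which $k$ and $\epsilon$ are recovered by (\ref{k4-DC}) and (\ref{epsilon4-DC}).

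Next I would analyse (\ref{eq:xi4-DC}). Since $g_4'(x)=\erf x+\frac{2x}{\sqrt{\pi}}e^{-x^2}>0$ for $x>0$ and $g_4(0^+)=0$, $g_4(+\infty)=+\infty$, the map $g_4$ is a continuous strictly increasing bijection of $(0,+\infty)$ onto itself; hence (\ref{eq:xi4-DC}) has a unique positive root $\xi$ if and only if its right-hand side is positive, i.e.\ if and only if (\ref{R2-DC}) holds. Uniqueness of $\xi$ forces uniqueness of $k$ and of $\epsilon$, which is why the present case --- unlike Cases 1--3 --- admits at most one solution. For the lower bound $f_4(\xi)>0$: the prefactor $\frac{2q_0}{\gamma\rho c\sigma}\xi^2e^{-\xi^2}$ in $f_4(\xi)$ is positive, so $f_4(\xi)>0$ is equivalent to $\frac{q_0}{\rho l\sigma}e^{-\xi^2}>1$; since $e^{-\xi^2}<1$ this forces $\frac{q_0}{\rho l\sigma}>1$ (condition (\ref{R3-DC})) and is then equivalent to $\xi<\xi^{*}:=\sqrt{\ln(q_0/(\rho l\sigma))}$. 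As $\xi$ equals $g_4^{-1}$ of the right-hand side of (\ref{eq:xi4-DC}) and $g_4$ is increasing, $\xi<\xi^{*}$ is in turn equivalent to that right-hand side being smaller than $g_4(\xi^{*})$, which after dividing by the positive factor $\frac{\sigma\rho cD_\infty}{q_0\sqrt{\pi}}$ is precisely (\ref{R4-DC}).

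It remains to translate the upper bound $f_4(\xi)<1$, for which I would carry out a shape analysis of $f_4$ on $(0,\xi^{*})$. One checks $f_4(0^+)=f_4(\xi^{*})=0$ with $f_4>0$ in between, and computes $f_4'(x)=\frac{2q_0}{\gamma\rho c\sigma}\,x e^{-x^2}\bigl[\frac{q_0}{\rho l\sigma}(1-2x^2)e^{-x^2}-(1-x^2)\bigr]$, the bracket vanishing exactly at the roots of (\ref{eq:eta4-DC}). Writing the bracket as $(1-2x^2)e^{-x^2}\bigl(\frac{q_0}{\rho l\sigma}-\psi(x)\bigr)$ with $\psi(x)=\frac{(1-x^2)e^{x^2}}{1-2x^2}$, and showing that $\psi$ increases from $1$ to $+\infty$ on $(0,1/\sqrt2)$ while the bracket at $\xi^{*}$ equals $-\ln(q_0/(\rho l\sigma))<0$, one obtains that on $(0,\xi^{*})$ the bracket has a single sign change, at the point $\eta$ determined by (\ref{eq:eta4-DC}), with $0<\eta<\xi^{*}$; hence $f_4$ increases on $(0,\eta)$ and decreases on $(\eta,\xi^{*})$, so $\max_{(0,\xi^{*}]}f_4=f_4(\eta)>0$. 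Then $f_4(\xi)<1$ is decided by the trichotomy of $f_4(\eta)$ against $1$: if $f_4(\eta)<1$ (case (\ref{R9-DC})) it holds for every admissible $\xi$; if $f_4(\eta)=1$ (case (\ref{R7-DC})) it holds iff $\xi\neq\eta$, i.e.\ iff $g_4(\xi)\neq g_4(\eta)$, which is (\ref{R8-DC}); if $f_4(\eta)>1$ (case (\ref{R5-DC})), unimodality makes $\{x\in(0,\xi^{*}):f_4(x)\ge1\}$ the closed interval $[\zeta_1,\zeta_2]$ cut out by the two roots of (\ref{eq:z1z24-DC}), so $f_4(\xi)<1$ iff $\xi<\zeta_1$ or $\xi>\zeta_2$, i.e.\ (by monotonicity of $g_4$ and the same positive rescaling as above) iff (\ref{R6-DC}). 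Since exactly one of these three hypotheses on $f_4(\eta)$ can hold, combining them with (\ref{R2-DC})--(\ref{R4-DC}) gives the claimed equivalence.

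The main obstacle is the shape analysis in the last step: proving that (\ref{eq:eta4-DC}) locates a single interior maximizer of $f_4$ on $(0,\xi^{*})$, so that the superlevel set $\{f_4\ge1\}$ is an interval and the case split by $f_4(\eta)$ is exhaustive and mutually exclusive. Everything else --- the equivalences between the free boundary problem, the system (\ref{eq:1-DC})--(\ref{eq:2-DC}), and ``$\xi$ solves (\ref{eq:xi4-DC}) with $f_4(\xi)\in(0,1)$'', together with the passage from inequalities on $\xi$ to inequalities on the data via the monotone bijection $g_4$ --- is routine once that is established.
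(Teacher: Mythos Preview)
Your proposal is correct and follows the same strategy as the paper: reduce via Theorem~\ref{th:0-DC} to the scalar system, pass to the variable $\xi=\sigma/\sqrt{\alpha}$, use the strict monotonicity of $g_4$ to obtain (\ref{R2-DC}) and the uniqueness of $\xi$, translate $f_4(\xi)>0$ into (\ref{R3-DC})--(\ref{R4-DC}) via the same increasing bijection, and settle $f_4(\xi)<1$ by a shape analysis of $f_4$ leading to the trichotomy on $f_4(\eta)$. Your choice to carry out the shape analysis on the interval $(0,\xi^{*})$ rather than on all of $\mathbb{R}^{+}$, as the paper does, is in fact slightly sharper: equation~(\ref{eq:eta4-DC}) actually has a \emph{second} positive root beyond $\xi^{*}$ (a minimum of $f_4$, lying in the region where $f_4<0$), so the paper's claim that $\eta$ is the unique positive solution is not literally correct---your restriction to $(0,\xi^{*})$ sidesteps this, and the theorem stands once $\eta$ is read as the root in that interval.
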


\begin{proof}
As in the proof of the Theorem \ref{th:1-DC}, we have from Theorem \ref{th:0-DC} that the phase-change process (\ref{1-DC})-(\ref{7-DC}) has the solution given in (\ref{sT-DC})-(\ref{sr-DC}) if and only if $\epsilon$ and $k$ satisfy equations (\ref{eq:1-DC}) and (\ref{eq:2-DC}). Introducing the following dimensionless parameter:
\begin{equation}\label{xi-DC}
\xi=\frac{\sigma}{\sqrt{\alpha}}=\sigma\sqrt{\frac{\rho c}{k}}\text{,}
\end{equation}
we have that the solution of the system of equations (\ref{eq:1-DC})-(\ref{eq:2-DC}) is given by (\ref{epsilon4-DC})-(\ref{k4-DC}) if and only if $\xi$ is a solution to equation (\ref{eq:xi4-DC}). Then, we need to prove that the restrictions on data given in the statement are necessary and sufficient conditions for the existence of a positive solution to equation (\ref{eq:xi4-DC}) and for obtain that the coefficient $\epsilon$ given in (\ref{epsilon4-DC}) is a number between $0$ and $1$.

We first note that equation (\ref{eq:xi4-DC}) admits a positive solution if and only if inequality (\ref{R2-DC}) holds, because $g_4$ is an increasing function from $0$ to $+\infty$ in $\mathbb{R}^+$. Henceforth, we will assume that (\ref{R2-DC}) holds.

Let us now focus on the fact that $\epsilon\in(0,1)$. On one side, we have that $\epsilon$ is less than 1 if and only if (see (\ref{epsilon4-DC})): 
\begin{equation*}
0<\frac{q_0}{\rho l\sigma}\exp{(-\xi^2)}-1\text{,}
\end{equation*}
which is equivalent to inequality (\ref{R3-DC}) and:
\begin{equation}\label{R3-1-DC}
\xi<\log\left(\frac{q_0}{\rho l\sigma}\right)\text{.}
\end{equation}
Since $g_4$ is an increasing function and $\xi$ satisfies equation (\ref{eq:xi4-DC}), applying function $g_4$ side by side of inequality (\ref{R3-1-DC}) we have that it is equivalent to inequality (\ref{R4-DC}). Therefore, from now on we will assume that inequalities (\ref{R3-DC}) and (\ref{R4-DC}) also hold.

\noindent On the other side, we have that $\epsilon$ is positive if and only if (see (\ref{epsilon4-DC})):
\begin{equation}\label{ff4-DC}
f_4(\xi)<1\text{.}
\end{equation}
Let us now think of the existence of extreme values of $f_4$. We have that $f'_4(x)=0$ if and only if $x$ is a positive solution to equation (\ref{eq:eta4-DC}). In order to study equation (\ref{eq:eta4-DC}), let us introduce the real functions $u_4$ and $v_4$ defined by:
\begin{equation}\label{u4v4-DC}
u_4(x)=\frac{q_0}{\rho l\sigma}(1-2x^2)
\hspace{1cm}\text{and}\hspace{1cm}
v_4(x)=(1-x^2)\exp{(x^2)}\text{,}\hspace{0.5cm}x>0\text{.}
\end{equation}
Since $u_4$ and $v_4$ are decreasing functions such that:
\begin{align*}
&u_4(0^+)=\frac{q_0}{\rho l\sigma}>1,
\hspace{1cm}u_4(1/\sqrt{2})=0,
\hspace{1cm}u_4(+\infty)=-\infty,\\
&v_4(0^+)=1, 
\hspace{2.3cm}v_4(1)=0, 
\hspace{1.8cm}v_4(+\infty)=-\infty,
\end{align*}
it follows that equation (\ref{eq:eta4-DC}) admits only one positive solution $\eta$. Hence, $f_4$ admits only one critical point, $\eta$. Moreover, $f'_4(x)>0$ if and only if $0<x<\eta$. Then, $f_4$ has a relative maximum in $\eta$. Since $f_4(0^+)=0$ and $f_4(+\infty)=0$, it follows that $f_4$ reaches its absolute maximum in $\eta$ and that $f_4(\eta)>0$.

\noindent Now, we will study three different situations: $f(\eta)>1$, $f(\eta)=1$ and $f(\eta)<1$, which are related to conditions given in Group 1, Group 2 and Group 3, respectively.

\noindent If $f(\eta)>1$, that is, if inequality (\ref{R5-DC}) holds, we have that $\xi$ satisfies inequality (\ref{ff4-DC}) if and only if:
\begin{equation}\label{zz1zz2-DC}
\xi<\zeta_1\hspace{1cm}\text{or}\hspace{1cm}\xi>\zeta_2\text{,}
\end{equation}
where $\zeta_1$ and $\zeta_2$ are the only two positive solutions to equation (\ref{eq:z1z24-DC}). Applying the increasing function $g_4$ side by side to both inequalities and taking into account that $\xi$ satisfies equation (\ref{eq:xi4-DC}), it follows that (\ref{zz1zz2-DC}) is equivalent to (\ref{R6-DC}).

\noindent If $f(\eta)=1$, that is, if (\ref{R7-DC}) holds, we have that $\xi$ satisfies inequality (\ref{ff4-DC}) if and only if $\xi\neq\eta$. We now proceed as in the previous situation and obtain that $\xi\neq\eta$ is equivalent to (\ref{R8-DC}).

\noindent Finally, if $f(\eta)<1$, that is, if (\ref{R9-DC}) holds, we have that inequality (\ref{ff4-DC}) holds immediately.
\end{proof}

The previous Theorem \ref{th:4-DC} states necessary and sufficient conditions for the data in the overspecified free boundary value problem (\ref{1-DC})-(\ref{7-DC}) under which it is possible to find the temperature $T=T(x,t)$, the free boundary $x=r(t)$ and the two unknown thermal coefficients $\epsilon$ and $k$. Nevertheless, there are also some sufficient conditions on data, which are easier to check than the necessary and sufficient conditions given in Theorem \ref{th:3-DC}, that enable us to find the solution to problem (\ref{1-DC})-(\ref{7-DC}). Next Proposition is related to those sufficient conditions.  

\begin{proposition}[Sufficient conditions for Case 4]\label{th:4bis-DC}
Let problem (\ref{1-DC})-(\ref{7-DC}) with unknown thermal coefficients $\epsilon$ and $k$. If the remainder physical parameters satisfy inequality (\ref{R3-DC}) and the following three conditions:
\begin{align}
\label{R10-DC}&\frac{q_0\sqrt{\pi}}{\sigma\rho cD_\infty}
g_4\left(\frac{1}{\nu_4}\right)<
1-\frac{q_0}{h_0D_\infty}<
\frac{q_0\sqrt{\pi}}{\sigma\rho cD_\infty}
g_4\left(\ln{\left(\frac{q_0}{\rho l\sigma}\right)}\right)\tag{R10}\\
\label{R11-DC}&0<\frac{2q_0}{\rho\gamma c\sigma}\ln{\left(\frac{q_0}{\rho l\sigma}\right)}\left(\frac{q_0}{\rho l\sigma}-1\right)-1\tag{R11}
\end{align}
where:
\begin{equation}\label{x24-DC}
\nu_4=\frac{\rho l\sigma}{2q_0}\ln{\left(\frac{q_0}{\rho l\sigma}\right)}\left[1+\sqrt{1+\frac{2\gamma c}{l\ln\left(\frac{q_0}{\rho l\sigma}\right)}}\right]\text{,}
\end{equation}
then the solution to problem (\ref{1-DC})-(\ref{7-DC}) is given by (\ref{sT-DC})-(\ref{sr-DC}) with $\epsilon$ and $k$ given by (\ref{epsilon4-DC}) and (\ref{k4-DC}), being $\xi$ the only positive solution to the equation (\ref{eq:xi4-DC}).
\end{proposition}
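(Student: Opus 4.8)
The plan is to reduce the statement to Theorem~\ref{th:4-DC}: I would check that (\ref{R3-DC}), (\ref{R10-DC}) and (\ref{R11-DC}) imply the necessary and sufficient conditions collected in that theorem, after which Theorem~\ref{th:4-DC} supplies exactly the asserted solution. Concretely, by Theorem~\ref{th:0-DC} and the reduction carried out at the start of the proof of Theorem~\ref{th:4-DC}, it is enough to produce the unique positive root $\xi$ of equation (\ref{eq:xi4-DC}) and to show that the associated coefficient $\epsilon=1-f_4(\xi)$ satisfies $0<\epsilon<1$.

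First I would note that the left-hand term of (\ref{R10-DC}) equals $\tfrac{q_0\sqrt{\pi}}{\sigma\rho cD_\infty}\,g_4(1/\nu_4)>0$, so (\ref{R10-DC}) forces $1-\tfrac{q_0}{h_0D_\infty}>0$, that is (\ref{R2-DC}); since $g_4$ is a strictly increasing bijection of $\mathbb{R}^+$ onto $\mathbb{R}^+$ (recorded in the proof of Theorem~\ref{th:4-DC}), equation (\ref{eq:xi4-DC}) then has a unique positive solution $\xi$. Using (\ref{eq:xi4-DC}) to rewrite $1-\tfrac{q_0}{h_0D_\infty}=\tfrac{q_0\sqrt{\pi}}{\sigma\rho cD_\infty}\,g_4(\xi)$ and applying $g_4^{-1}$ to the double inequality (\ref{R10-DC}), monotonicity of $g_4$ yields the localization
\[
\frac{1}{\nu_4}<\xi<\ln\!\left(\frac{q_0}{\rho l\sigma}\right),
\]
the logarithm being positive by (\ref{R3-DC}). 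The upper estimate gives $\tfrac{q_0}{\rho l\sigma}\exp(-\xi^2)>1$, hence $f_4(\xi)>0$ and $\epsilon<1$; pushing this estimate back through $g_4$ recovers inequality (\ref{R4-DC}).

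The substantive step is $f_4(\xi)<1$, i.e.\ $\epsilon>0$, and this is where the lower bound $\xi>1/\nu_4$ and inequality (\ref{R11-DC}) enter. On the window $\bigl(1/\nu_4,\ \ln(q_0/(\rho l\sigma))\bigr)$ to which $\xi$ has been confined, I would bound $f_4$ from above by an explicit elementary majorant --- replacing the exponential factors of $f_4$ by secant/tangent-line estimates --- whose ``$\,=1\,$'' level is the positive root of a quadratic; the coefficient $\nu_4$ of (\ref{x24-DC}) is precisely the reciprocal of that root, and (\ref{R11-DC}) is the resulting numerical inequality keeping the majorant below $1$ throughout the window, so that $f_4(\xi)<1$ and $\epsilon\in(0,1)$. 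In terms of the trichotomy of Theorem~\ref{th:4-DC} this amounts to saying that the localization pushes $\xi$ beyond $\zeta_2$ when $f_4(\eta)>1$ (the second alternative of (\ref{R6-DC}), Group~1) and is automatic when $f_4(\eta)\le 1$ (Groups~2 and~3). I expect this majorization --- choosing the right elementary bound for $f_4$, reading $\nu_4$ off the associated quadratic, and matching it with (\ref{R11-DC}) --- to be the only real obstacle; everything else reduces to monotonicity of $g_4$ and bookkeeping.

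Finally, with $\xi$ the unique positive solution of (\ref{eq:xi4-DC}) and $\epsilon\in(0,1)$, Theorem~\ref{th:4-DC} (equivalently Theorem~\ref{th:0-DC}) gives that the solution of (\ref{1-DC})--(\ref{7-DC}) is $(T,r)$ as in (\ref{sT-DC})--(\ref{sr-DC}) with $\epsilon$ and $k$ as in (\ref{epsilon4-DC})--(\ref{k4-DC}), which is the claim.
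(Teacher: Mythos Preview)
Your overall strategy is the paper's: extract (\ref{R2-DC}) from the left inequality of (\ref{R10-DC}), obtain the unique $\xi$ by monotonicity of $g_4$, use the right inequality for $\epsilon<1$, and use the left inequality together with (\ref{R11-DC}) for $\epsilon>0$ through a quadratic bound. Where your sketch diverges is the nature of the majorant. The paper does not linearize the exponentials by tangent or secant lines; once $\xi^2<\ln\bigl(q_0/(\rho l\sigma)\bigr)$ has been secured, it simply replaces the polynomial factor $x^2$ in $f_4$ by that constant upper bound, giving
\[
f_4(\xi)\;<\;\frac{2q_0}{\gamma\rho c\sigma}\,\ln\!\Bigl(\tfrac{q_0}{\rho l\sigma}\Bigr)\Bigl(\tfrac{q_0}{\rho l\sigma}\,e^{-\xi^{2}}-1\Bigr)e^{-\xi^{2}},
\]
which is the quadratic $w_4(Y)=a_4Y^2-b_4Y-1$ evaluated at $Y=e^{-\xi^{2}}$. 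Here $\nu_4$ is the \emph{positive root of $w_4$ itself}, not its reciprocal; condition (\ref{R11-DC}) is precisely $w_4(1)>0$, which forces $\nu_4<1$, and the left inequality of (\ref{R10-DC}) is meant to encode $\xi>\sqrt{\ln(1/\nu_4)}$, i.e.\ $e^{-\xi^{2}}<\nu_4$, whence $w_4(e^{-\xi^{2}})<0$ and $f_4(\xi)<1$. This also tells you that the arguments of $g_4$ in (\ref{R10-DC}) should read $\sqrt{\ln(1/\nu_4)}$ and $\sqrt{\ln(q_0/(\rho l\sigma))}$ (as the paper's Appendix records), not $1/\nu_4$ and $\ln(q_0/(\rho l\sigma))$ as in the displayed statement; with the latter your implication ``$\xi<\ln(q_0/(\rho l\sigma))\Rightarrow \tfrac{q_0}{\rho l\sigma}e^{-\xi^{2}}>1$'' breaks down whenever $\ln(q_0/(\rho l\sigma))>1$.
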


\begin{proof}
Let us assume that inequalities (\ref{R3-DC}), (\ref{R10-DC}) and (\ref{R11-DC}) hold. We have seen in the proof of Theorem \ref{th:4-DC} that $\epsilon$ and $k$ must be given by (\ref{epsilon4-DC}) and (\ref{k4-DC}), and that $\xi$ must satisfy equation (\ref{eq:xi4-DC}). We have also seen that (\ref{R2-DC}) is a necessary and sufficient condition for the existence and uniqueness of the solution to equation (\ref{eq:xi4-DC}). Then, since the first inequality in (\ref{R10-DC}) implies (\ref{R2-DC}), we have that equation (\ref{eq:xi4-DC}) admits only one positive solution. Moreover, we have seen that $\epsilon$ given in (\ref{epsilon4-DC}) is less than 1 if and only if inequalities (\ref{R3-DC}) and (\ref{R4-DC}) hold. Since inequality (\ref{R10-DC}) implies inequalities (\ref{R3-DC}) and (\ref{R4-DC}), we have that $\epsilon$ given in (\ref{epsilon4-DC}) is less than 1. Finally, we have also seen that $\epsilon$ given in (\ref{epsilon4-DC}) is positive if and only if inequality (\ref{ff4-DC}) holds. The rest of the proof will be devoted to demonstrate that the first inequality in (\ref{R10-DC}) and (\ref{R11-DC}) imply inequality (\ref{ff4-DC}).

Since $\epsilon$ given in (\ref{epsilon4-DC}) is positive, we have that $\frac{q_0}{\rho l\sigma}\exp{(-\xi^2)}-1>0$, that is:
\begin{equation*}
\xi^2<\ln\left(\frac{q_0}{\rho l\sigma}\right)\text{.}
\end{equation*}
Then we have (see (\ref{f4g4-DC})):
\begin{equation}\label{ineq:4-DC}
f_4(\xi)<\frac{2q_0}{\gamma\rho c\sigma}\left(\frac{q_0}{\rho l\sigma}\exp{(-\xi^2)}-1\right)\ln\left(\frac{q_0}{\rho l\sigma}\right)\exp{(-\xi^2)}\text{.}
\end{equation}
From the above, it follows that it enough to prove that the first inequality in (\ref{R10-DC}) and (\ref{R11-DC}) imply that the right hand side of (\ref{ineq:4-DC}) is less than 1. Let $w_4$ the function defined by:
\begin{equation}\label{w4-DC}
w_4(x)=a_4x^2-b_4x-1\text{,}\hspace{0.5cm}x>0\text{,}
\end{equation}
with $a_4$ and $b_4$ given by:
\begin{equation}\label{a4b4-DC}
a_4=\frac{2q_0^2}{\rho^2\sigma^2l\gamma c}\ln\left(\frac{q_0}{\rho l\sigma}\right)>0
\hspace{1cm}\text{and}\hspace{1cm}
b_4=\frac{2q_0}{\rho\sigma\gamma c}\ln\left(\frac{q_0}{\rho l\sigma}\right)>0\text{.}
\end{equation}
We have that $\nu_4$ given in (\ref{x24-DC}) is a positive root of $w_4$. Moreover, we have that inequalities (\ref{R3-DC}) and (\ref{R11-DC}) imply $\nu_4<1$. Since the another root of $w_4$ is negative, we have that the right hand side of (\ref{ineq:4-DC}) is less than 1 if and only if $\exp{(-\xi^2)}<\nu_4$, that is, if and only if:
\begin{equation*}
\xi>\sqrt{\ln\left(\frac{1}{\nu_4}\right)}\text{.}
\end{equation*}
Only remains to observe that this last inequality is equivalent to the first inequality in (\ref{R10-DC}) because $g_4$ is an increasing function and $\xi$ satisfies equation (\ref{eq:xi4-DC}).
\end{proof}

\begin{theorem}[Case 5: determination of $\epsilon$ and $\rho$]\label{th:5-DC}
If we consider the phase-change process (\ref{1-DC})-(\ref{7-DC}) with unknown thermal coefficients $\epsilon$ and $\rho$, then it has the solution given by (\ref{sT-DC})-(\ref{sr-DC}) with:
\begin{align}
\label{epsilon5-DC}&\epsilon=1-f_5(\xi)\\
\label{rho5-DC}&\rho=\frac{k}{c}\left(\frac{\xi}{\sigma}\right)^2
\end{align}
where $\xi$ is the only positive solution to the equation:
\begin{equation}\label{eq:xi5-DC}
g_5(x)=\frac{kD_\infty}{q_0\sigma\sqrt{\pi}}\left(1-\frac{q_0}{h_0D_\infty}\right)\tag{E5}
\end{equation}
and the real functions $f_5$ and $g_5$ are defined by:
\begin{equation}\label{f5g5-DC}
f_5(x)=\frac{2q_0\sigma}{\gamma k}\left(\frac{q_0 c\sigma}{lk}\,\,\frac{\exp{(-x^2)}}{x^2}-1\right)\exp{(-x^2)}
\hspace{1cm}\text{and}\hspace{1cm}
g_5(x)=\frac{\erf{(x)}}{x}\text{,}\hspace{0.5cm}x>0\text{,}
\end{equation}
if and only if the remainder physical parameters satisfy the following two conditions:
\begin{equation}\label{R12-DC}
\frac{q_0\sigma\sqrt{\pi}}{kD_\infty}g_5(\zeta_1)<
1-\frac{q_0}{h_0D_\infty}<
\min\left\{\frac{2q_0\sigma}{kD_\infty},\frac{q_0\sigma\sqrt{\pi}}{kD_\infty}g_5(\zeta_2)\right\}\text{,}\tag{R12}
\end{equation}
where $\zeta_1$ and $\zeta_2$ are, respectively, the only positive solutions to equations:
\begin{align}
\label{eq:zeta15-DC}&\frac{q_0\sigma c}{lk}\exp{(-x^2)}=x^2\\
\label{eq:zeta25-DC}&\frac{q_0\sigma c}{lk}\exp{(-x^2)}=\left[\frac{\gamma k}{2q_0\sigma}\exp{(x^2)}+1\right]x^2\text{.}
\end{align}
\end{theorem}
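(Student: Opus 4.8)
The plan is to follow the scheme of the proof of Theorem~\ref{th:4-DC}. By Theorem~\ref{th:0-DC}, solving problem (\ref{1-DC})-(\ref{7-DC}) with $\epsilon$ and $\rho$ unknown amounts to solving the system (\ref{eq:1-DC})-(\ref{eq:2-DC}) for these two coefficients. I would reuse the dimensionless unknown $\xi=\sigma\sqrt{\rho c/k}$ from (\ref{xi-DC}), so that $\rho=\tfrac{k}{c}(\xi/\sigma)^2$ is precisely (\ref{rho5-DC}) and $\sigma^2/\alpha=\xi^2$, $\sqrt{\alpha}=\sigma/\xi$. Substituting these relations into (\ref{eq:2-DC}) and dividing through by $\xi$ turns that equation into $g_5(\xi)=\tfrac{kD_\infty}{q_0\sigma\sqrt{\pi}}\bigl(1-\tfrac{q_0}{h_0D_\infty}\bigr)$, which is (\ref{eq:xi5-DC}); substituting them into (\ref{eq:1-DC}), solving for $1-\epsilon$, and factoring out $\tfrac{2q_0\sigma}{\gamma k}$ yields $\epsilon=1-f_5(\xi)$, which is (\ref{epsilon5-DC}). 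Hence the theorem reduces to showing that condition (\ref{R12-DC}) is necessary and sufficient for equation (\ref{eq:xi5-DC}) to have a unique positive root $\xi$ and for the corresponding $\epsilon$ to lie in $(0,1)$.

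For the existence of $\xi$ I would verify that $g_5(x)=\erf(x)/x$ is strictly decreasing on $\mathbb{R}^+$, with $g_5(0^+)=2/\sqrt{\pi}$ and $g_5(+\infty)=0$; the monotonicity follows by differentiating $g_5$ and observing that $\tfrac{2}{\sqrt{\pi}}xe^{-x^2}-\erf(x)$ vanishes at $0$ and has a negative derivative. Thus (\ref{eq:xi5-DC}) has exactly one positive solution if and only if its right-hand side lies in $(0,2/\sqrt{\pi})$, the upper part of which reads $1-\tfrac{q_0}{h_0D_\infty}<\tfrac{2q_0\sigma}{kD_\infty}$ (the lower part, $1-\tfrac{q_0}{h_0D_\infty}>0$, being a consequence of the conditions below). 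For $\epsilon<1$, that is $f_5(\xi)>0$, clearing the positive factors reduces the inequality to $\tfrac{q_0c\sigma}{lk}e^{-\xi^2}>\xi^2$; since $x\mapsto\tfrac{q_0c\sigma}{lk}e^{-x^2}-x^2$ is strictly decreasing, positive at $0^+$ and tends to $-\infty$, it has the unique zero $\zeta_1$ of (\ref{eq:zeta15-DC}), so that $\epsilon<1\iff\xi<\zeta_1$. For $\epsilon>0$, that is $f_5(\xi)<1$, multiplying by the positive factors $\tfrac{\gamma k}{2q_0\sigma}$, $\xi^2$, $e^{\xi^2}$ and rearranging reduces it to $\tfrac{q_0c\sigma}{lk}e^{-\xi^2}<\xi^2\bigl(1+\tfrac{\gamma k}{2q_0\sigma}e^{\xi^2}\bigr)$; here the difference of the two sides is again strictly decreasing from a positive value to $-\infty$, so it has the unique zero $\zeta_2$ of (\ref{eq:zeta25-DC}) and $\epsilon>0\iff\xi>\zeta_2$.

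To conclude I would transport these inequalities in $\xi$ to inequalities in the data, using that $\xi$ solves (\ref{eq:xi5-DC}) and that $g_5$ is decreasing: $\xi<\zeta_1$ is equivalent to $g_5(\xi)>g_5(\zeta_1)$, i.e.\ to $1-\tfrac{q_0}{h_0D_\infty}>\tfrac{q_0\sigma\sqrt{\pi}}{kD_\infty}g_5(\zeta_1)$, and $\xi>\zeta_2$ is equivalent to $1-\tfrac{q_0}{h_0D_\infty}<\tfrac{q_0\sigma\sqrt{\pi}}{kD_\infty}g_5(\zeta_2)$. Conjoining these with $1-\tfrac{q_0}{h_0D_\infty}<\tfrac{2q_0\sigma}{kD_\infty}$ gives exactly the chain (\ref{R12-DC}) (and positivity of $1-\tfrac{q_0}{h_0D_\infty}$ is automatic, since $g_5(\zeta_1)>0$); in particular the unique root of (\ref{eq:xi5-DC}) then falls in $(\zeta_2,\zeta_1)$, so that $\zeta_2<\zeta_1$ whenever (\ref{R12-DC}) holds. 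The main technical obstacle, as in Case~4, will be the monotonicity computations for the auxiliary functions — especially the one attached to (\ref{eq:zeta25-DC}) — and keeping track of the equivalences when passing between $\xi$ and the data; unlike Case~4, no splitting into ``Groups'' is needed here, because each of $f_5(\xi)>0$ and $f_5(\xi)<1$ collapses to a single monotone inequality in $\xi$.
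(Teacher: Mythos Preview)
Your proposal is correct and follows the same overall scheme as the paper: invoke Theorem~\ref{th:0-DC}, introduce $\xi=\sigma\sqrt{\rho c/k}$, reduce (\ref{eq:2-DC}) to (\ref{eq:xi5-DC}) and (\ref{eq:1-DC}) to (\ref{epsilon5-DC}), use the monotonicity of $g_5$ for existence/uniqueness of $\xi$, and then translate the constraint $\epsilon\in(0,1)$ into (\ref{R12-DC}) via $g_5$.

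The one place where you diverge from the paper is in the analysis of $\epsilon\in(0,1)$. The paper studies $f_5$ globally: it locates the unique critical point $\eta$ of $f_5$ by solving $u_5(x)=x^2$ with $u_5(x)=\tfrac{q_0\sigma c}{lk}\bigl(1+\tfrac{1}{x^2}\bigr)e^{-x^2}$, shows $f_5$ has a global minimum there with $f_5(\eta)<0$, and from the shape $f_5(0^+)=+\infty$, $f_5(+\infty)=0$ deduces that $0<f_5(\xi)<1$ is equivalent to $\zeta_2<\xi<\zeta_1$ (the paper's text actually writes ``$\zeta_1<\xi<\zeta_2$'', which is a slip corrected by the final form of (\ref{R12-DC})). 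You instead treat the two inequalities $f_5(\xi)>0$ and $f_5(\xi)<1$ separately, clearing the positive factors to reduce each to a single strictly monotone auxiliary function with a unique zero ($\zeta_1$ and $\zeta_2$, respectively). Your route is a bit more elementary --- it never needs the critical-point computation for $f_5$ --- while the paper's global picture of $f_5$ makes the ordering $\zeta_2<\zeta_1$ visually immediate. Both lead to the same chain (\ref{R12-DC}).
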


\begin{proof}
As in the previous proofs, we have from Theorem \ref{th:0-DC} that the phase-change process (\ref{1-DC})-(\ref{7-DC}) has the solution given in (\ref{sT-DC})-(\ref{sr-DC}) if and only if $\epsilon$ and $\rho$ are given by (\ref{epsilon5-DC}) and (\ref{rho5-DC}), where the dimensionless parameter $\xi$ (see (\ref{xi-DC})) is a solution to equation (\ref{eq:xi5-DC}). Then, we need to prove that the two restrictions given by (\ref{R12-DC}) are necessary and sufficient conditions for the existence of a positive solution to equation (\ref{eq:xi5-DC}) and for obtain that the coefficient $\epsilon$ given in (\ref{epsilon5-DC}) is a number between $0$ and $1$.

Since $g_5$ is a decreasing function from $\frac{2}{\sqrt{\pi}}$ to $0$ in $\mathbb{R}^+$, we have that equation (\ref{eq:xi5-DC}) admits positive solutions if and only if:
\begin{equation}\label{R12-1-DC}
0<1-\frac{q_0}{h_0D_\infty}<\frac{2q_0\sigma}{kD_\infty}\text{.}
\end{equation}

Let us assume for a moment that (\ref{R12-1-DC}) holds and focus on the fact that $\epsilon\in(0,1)$. As we did in the proof of the Theorem \ref{th:4-DC}, let us first think about the existence of extreme values of $f_5$. We have that $f'_5(x)=0$ if and only if:
\begin{equation}\label{ff5-DC}
u_5(x)=x^2\text{,}
\end{equation}
where $u_5$ is the real function defined by:
\begin{equation}\label{u5-DC}
u_5(x)=\frac{q_0\sigma c}{lk}\left(1+\frac{1}{x^2}\right)\exp{(-x^2)}\text{,}\hspace{0.5cm}x>0\text{.}
\end{equation}
Since $u_5$ is an increasing function from $0$ to $+\infty$ in $\mathbb{R^+}$, it follows that equation (\ref{ff5-DC}) admits only one positive solution $\eta$. Then $f_5$ has only one critical point, $\eta$. Moreover, $f'_5(x)>0$ if and only if $x>\eta$. Then, $f_5$ has a relative minimum in $\eta$. Since $f_5(0^+)=+\infty$ and $f_5(+\infty)=0$, it follows that $f_5$ reaches its absolute minimum in $\eta$ and that $f_5(\eta)<0$. Then we have that the coefficient  $\epsilon$ given in (\ref{epsilon5-DC}) is a number between 0 and 1 if and only if $\zeta_1<\xi<\zeta_2$, where $\zeta_1$ and $\zeta_2$ are the only two positive numbers such that $f_5(\zeta_1)=0$ and $f_5(\zeta_2)=1$, that is, the only positive solutions to equations (\ref{eq:zeta15-DC}) and (\ref{eq:zeta25-DC}), respectively. Since $g_4$ is a decreasing function and $\xi$ satisfies equation (\ref{eq:xi5-DC}), we have that $\zeta_1<\xi<\zeta_2$ is equivalent to:
\begin{equation}\label{R12-2-DC}
\frac{q_0\sigma\sqrt{\pi}}{kD_\infty}g_5(\zeta_1)<1-\frac{q_0}{h_0D_\infty}<\frac{q_0\sigma\sqrt{\pi}}{kD_\infty}g_5(\zeta_1)\text{.}
\end{equation}
Only remains to observe that inequalities (\ref{R12-1-DC}) and (\ref{R12-2-DC}) are equivalent to the inequalities given by (\ref{R12-DC}).
\end{proof}

\begin{theorem}[Case 6: determination of $\epsilon$ and $c$]\label{th:6-DC}
If we consider the phase-change process (\ref{1-DC})-(\ref{7-DC}) with unknown thermal coefficients $\epsilon$ and $c$, then it has the solution given by (\ref{sT-DC})-(\ref{sr-DC}) with:
\begin{align}
\label{epsilon6-DC}&\epsilon=1-f_6(\xi)\\
\label{c6-DC}&c=\frac{k}{\rho}\left(\frac{\xi}{\sigma}\right)^2
\end{align}
where $\xi$ is the only positive solution to the equation (\ref{eq:xi5-DC}),
and $f_6$ is the real function defined by:
\begin{equation}\label{f6-DC}
f_6(x)=\frac{2q_0\sigma}{\gamma k}\left(\frac{q_0}{\rho l\sigma}\exp{(-x^2)}\right)\exp{(-x^2)}\text{,}\hspace{0.5cm}x>0\text{,}
\end{equation}
if and only if the remainder physical parameters satisfy the following condition:
\begin{equation}\label{R13-DC}
\frac{q_0\sigma\sqrt{\pi}}{kD_\infty}f_5\left(\sqrt{\ln{\left(\frac{q_0}{\rho l\sigma}\right)}}\right)<
1-\frac{q_0}{h_0D_\infty}\tag{R13}
\end{equation}
and any of the following two groups of conditions:\\
Group 1:
\begin{align}
\label{R14-DC}&\frac{q_0}{\rho l\sigma}\geq\frac{\gamma k}{2q_0\sigma}+1\tag{R14}\\
\label{R15-DC}&1-\frac{q_0}{h_0D_\infty}<
\min\left\{\frac{2q_0\sigma}{kD_\infty},\frac{q_0\sigma\sqrt{\pi}}{kD_\infty}f_5\left(\sqrt{\ln{\left(\frac{1}{\nu_6}\right)}}\right)
\right\}\tag{R15}
\end{align}
\hspace*{3.6cm} where:
\begin{equation}\label{x26-DC}
\nu_6=\frac{\rho l\sigma}{2q_0}\left[1+\sqrt{1+\frac{2\gamma k}{\sigma^2\rho l}}\right]\text{.}
\end{equation}
Group 2:
\begin{align}
\label{R16-DC}&1<\frac{q_0}{\rho l\sigma}<\frac{\gamma k}{2q_0\sigma}+1\tag{R16}\\
\label{R17-DC}&1-\frac{q_0}{h_0D_\infty}<
\frac{2q_0\sigma}{kD_\infty}\tag{R17}\text{.}
\end{align}
\end{theorem}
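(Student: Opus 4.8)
The plan is to mirror the arguments for Theorems~\ref{th:4-DC} and \ref{th:5-DC}. By Theorem~\ref{th:0-DC}, the phase-change process (\ref{1-DC})-(\ref{7-DC}) admits the solution (\ref{sT-DC})-(\ref{sr-DC}) if and only if the unknowns $\epsilon$ and $c$ satisfy (\ref{eq:1-DC})-(\ref{eq:2-DC}). Introducing $\xi=\sigma/\sqrt{\alpha}$ as in (\ref{xi-DC}), so that (\ref{c6-DC}) holds, I observe that $c$ enters (\ref{eq:2-DC}) only through $\alpha$, just as $\rho$ did in Case~5, whence (\ref{eq:2-DC}) becomes (\ref{eq:xi5-DC}); and, $k$ being a datum here, solving (\ref{eq:1-DC}) for $\epsilon$ gives $\epsilon=1-f_6(\xi)$, i.e. (\ref{epsilon6-DC}). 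Thus the statement is equivalent to: equation (\ref{eq:xi5-DC}) has a positive solution $\xi$ and, for it, $f_6(\xi)\in(0,1)$; and the task is to show that this occurs precisely under (\ref{R13-DC}) together with one of the two groups of hypotheses.

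For the solvability of (\ref{eq:xi5-DC}) I reuse the fact from Case~5 that $g_5(x)=\erf(x)/x$ is a strictly decreasing bijection of $\mathbb{R}^+$ onto $(0,2/\sqrt{\pi})$: the equation has a (necessarily unique) positive root iff $0<1-q_0/(h_0D_\infty)<2q_0\sigma/(kD_\infty)$. This monotonicity is the main device of the proof: since $g_5(\xi)$ equals the right-hand side of (\ref{eq:xi5-DC}), every inequality imposed on $\xi$ converts, on applying $g_5$, into an inequality on $1-q_0/(h_0D_\infty)$, with the sense reversed.

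Next I describe the range of $f_6$. Substituting $t=e^{-x^2}$ turns $f_6$ into a fixed positive multiple of the quadratic $\frac{q_0}{\rho l\sigma}t^2-t$, which is an upward parabola, and $x\mapsto t$ is a decreasing bijection of $\mathbb{R}^+$ onto $(0,1)$, so it is enough to study that parabola on $(0,1)$. The condition $f_6(\xi)>0$ (i.e. $\epsilon<1$) holds iff $t>\rho l\sigma/q_0$, which is feasible only when $q_0/(\rho l\sigma)>1$ and is then equivalent to $\xi^2<\ln(q_0/(\rho l\sigma))$; the condition $f_6(\xi)<1$ (i.e. $\epsilon>0$) is a quadratic inequality in $t$ whose pertinent root is precisely $\nu_6$ of (\ref{x26-DC}), so it reads $t<\nu_6$. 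One then checks that $\nu_6\leq 1$ is equivalent to (\ref{R14-DC}), and this is what splits the argument: in Group~1, where $\nu_6\leq 1$, the bound $t<\nu_6$ is active and $f_6(\xi)\in(0,1)$ becomes $\sqrt{\ln(1/\nu_6)}<\xi<\sqrt{\ln(q_0/(\rho l\sigma))}$; in Group~2 (\ref{R16-DC}), where $\nu_6>1$, that bound is automatic on $(0,1)$ and $f_6(\xi)\in(0,1)$ reduces to $\xi<\sqrt{\ln(q_0/(\rho l\sigma))}$. Pushing these interval constraints through the decreasing bijection $g_5$ and adjoining the solvability bound $1-q_0/(h_0D_\infty)<2q_0\sigma/(kD_\infty)$ yields exactly (\ref{R13-DC}) with (\ref{R15-DC}) in the first case and (\ref{R13-DC}) with (\ref{R17-DC}) in the second.

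I expect the difficulty to lie entirely in the bookkeeping. The delicate points should be: recognizing $\nu_6$ of (\ref{x26-DC}) as the positive root of the relevant quadratic and checking the (automatic) inequality $\nu_6>\rho l\sigma/q_0$ that makes the interval $(\sqrt{\ln(1/\nu_6)},\sqrt{\ln(q_0/(\rho l\sigma))})$ non-empty whenever Group~1 applies; justifying the squaring step in proving that $\nu_6\leq 1$ is equivalent to (\ref{R14-DC}); and matching the $g_5$-images of the endpoints $\sqrt{\ln(1/\nu_6)}$ and $\sqrt{\ln(q_0/(\rho l\sigma))}$ against the precise bounds appearing in (\ref{R13-DC}) and (\ref{R15-DC}). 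None of this is hard, provided one keeps the direction of each inequality straight through the two order-reversing substitutions $x\mapsto e^{-x^2}$ and $x\mapsto g_5(x)$.
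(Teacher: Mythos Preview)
Your argument is correct and follows the route the paper itself indicates (its proof reads in full: ``It is similar to the proof of Theorem~\ref{th:4-DC}''): reduce via Theorem~\ref{th:0-DC}, rewrite (\ref{eq:2-DC}) as (\ref{eq:xi5-DC}) in the variable $\xi$, and convert the constraint $f_6(\xi)\in(0,1)$ into bounds on $1-q_0/(h_0D_\infty)$ through the decreasing bijection $g_5$; your substitution $t=e^{-x^2}$, which turns $f_6$ into a quadratic and makes $\nu_6$ appear as an explicit root, is a tidy shortcut within that same framework rather than a different method. One remark on matching your output to the statement: the bounds you obtain are in terms of $g_5$, which agrees with the appendix versions of (\ref{R13-DC}) and (\ref{R15-DC}); the symbol $f_5$ in the theorem as stated is a misprint for $g_5$ (indeed $f_5$ depends on the unknown $c$).
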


\begin{proof}
It is similar to the proof of Theorem \ref{th:4-DC}.
\end{proof}

\begin{theorem}[Case 7: determination of $\gamma$ and $k$]\label{th:7-DC}
If we consider the phase-change process (\ref{1-DC})-(\ref{7-DC}) with unknown thermal coefficients $\gamma$ and $k$, then it has the solution given by (\ref{sT-DC})-(\ref{sr-DC}) with:
\begin{equation}\label{gamma7-DC}
\gamma=\frac{2q_0}{\sigma\rho c(1-\epsilon)}\left(\frac{q_0}{\rho l\sigma}\exp{(-\xi^2)}-1\right)\xi^2\exp{(-\xi^2)}
\end{equation}
and $k$ given by (\ref{k4-DC}),
where $\xi$ is the only positive solution to the equation (\ref{eq:xi4-DC}),
if and only if the physical parameters $h_0$, $q_0$, $D_\infty$, $\sigma$, $l$, $\rho$ and $c$ satisfy conditions (\ref{R2-DC}), (\ref{R3-DC}) and (\ref{R4-DC}).
\end{theorem}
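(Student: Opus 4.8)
The plan is to run the argument of the proof of Theorem~\ref{th:4-DC} almost verbatim, the only change being a bookkeeping one: Case~7 is Case~4 with the two unknowns interchanged. In both cases $k$ is recovered from the overspecified convective condition (\ref{eq:2-DC}); the difference is that equation (\ref{eq:1-DC}) is now solved for $\gamma$ instead of for $\epsilon$, so that in place of the two-sided requirement $0<\epsilon<1$ we need only the one-sided requirement $\gamma>0$. This is precisely why the list of restrictions collapses to (\ref{R2-DC}), (\ref{R3-DC}) and (\ref{R4-DC}), without any analogue of the Groups~1--3 in Theorem~\ref{th:4-DC}.

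First I would invoke Theorem~\ref{th:0-DC} to replace the free boundary value problem (\ref{1-DC})--(\ref{7-DC}) with unknowns $\gamma$ and $k$ by the equivalent system (\ref{eq:1-DC})--(\ref{eq:2-DC}) in those two unknowns. Introducing the dimensionless parameter $\xi=\sigma/\sqrt{\alpha}=\sigma\sqrt{\rho c/k}$ from (\ref{xi-DC}), one has $k=\rho c(\sigma/\xi)^2$, which is formula (\ref{k4-DC}), and $\exp(\sigma^2/\alpha)=\exp(\xi^2)$; substituting these into (\ref{eq:2-DC}) turns it into equation (\ref{eq:xi4-DC}), namely $g_4(\xi)=\frac{\sigma\rho cD_\infty}{q_0\sqrt{\pi}}\bigl(1-\frac{q_0}{h_0D_\infty}\bigr)$ with $g_4(x)=x\,\erf(x)$. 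Since $g_4$ is strictly increasing from $0$ to $+\infty$ on $\mathbb{R}^+$, this equation has a unique positive root $\xi$ exactly when its right-hand side is positive, i.e. exactly when (\ref{R2-DC}) holds — the same observation opening the proof of Theorem~\ref{th:4-DC}.

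Next I would substitute $k=\rho c(\sigma/\xi)^2$ and $\exp(\sigma^2/\alpha)=\exp(\xi^2)$ into (\ref{eq:1-DC}) and solve for $\gamma$; a routine rearrangement gives formula (\ref{gamma7-DC}). (Equivalently, this is just equation (\ref{eq:1-DC}) — which in Case~4 read $\epsilon=1-f_4(\xi)$ — solved for $\gamma$, so the new $\gamma$ equals $\tfrac{2q_0}{(1-\epsilon)\rho c\sigma}$ times $\xi^2\exp(-\xi^2)$ times the same bracket $\bigl(\tfrac{q_0}{\rho l\sigma}\exp(-\xi^2)-1\bigr)$ appearing in $f_4$.) It remains only to decide when this $\gamma$ is admissible, that is, strictly positive. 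Since $\epsilon\in(0,1)$ is a known datum, the prefactor $\tfrac{2q_0}{\sigma\rho c(1-\epsilon)}$ and the factor $\xi^2\exp(-\xi^2)$ are positive, hence $\gamma>0$ if and only if $\tfrac{q_0}{\rho l\sigma}\exp(-\xi^2)-1>0$. Exactly as in the step from this inequality to (\ref{R4-DC}) in the proof of Theorem~\ref{th:4-DC}, this is equivalent to (\ref{R3-DC}) together with $\xi^2<\ln(q_0/(\rho l\sigma))$, and applying the increasing function $g_4$ to both sides of the latter and using that $\xi$ solves (\ref{eq:xi4-DC}) shows $\xi^2<\ln(q_0/(\rho l\sigma))$ to be equivalent to (\ref{R4-DC}). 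Assembling these equivalences yields that (\ref{R2-DC}), (\ref{R3-DC}), (\ref{R4-DC}) are jointly necessary and sufficient.

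The step I expect to require the most attention is this last one, in which the constraint $\xi^2<\ln(q_0/(\rho l\sigma))$ on the implicitly defined root $\xi$ must be rewritten as an inequality purely among the data. The mechanism is the monotonicity transfer already exploited in Theorem~\ref{th:4-DC}: because $\xi$ is pinned down by $g_4(\xi)=(\text{data})$ with $g_4$ strictly increasing, a one-sided bound on $\xi$ is equivalent to the same one-sided bound on $g_4(\xi)$. Since this has already been carried out there, no genuinely new difficulty arises, and — the surviving unknown being $\gamma$, for which positivity alone suffices — none of the finer case distinctions enforcing $\epsilon>0$ in Theorem~\ref{th:4-DC} is needed here.
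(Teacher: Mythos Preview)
Your proposal is correct and follows essentially the same approach as the paper: reduce to the system (\ref{eq:1-DC})--(\ref{eq:2-DC}) via Theorem~\ref{th:0-DC}, recover $k$ through the dimensionless parameter $\xi$ determined by (\ref{eq:xi4-DC}) under (\ref{R2-DC}), solve (\ref{eq:1-DC}) for $\gamma$, and translate the positivity requirement $\tfrac{q_0}{\rho l\sigma}\exp(-\xi^2)>1$ into (\ref{R3-DC}) and (\ref{R4-DC}) by the monotonicity argument already carried out in Theorem~\ref{th:4-DC}. The paper's own proof is essentially the two-sentence summary of exactly this, referring back to Theorem~\ref{th:4-DC} at the same two junctures you identify.
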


\begin{proof}
Once again, we have from the Theorem \ref{th:0-DC} that $\gamma$ and $k$ must be given by (\ref{gamma7-DC}) and (\ref{k4-DC}), where the dimensionless parameter $\xi$ (see (\ref{xi-DC})) is a solution to equation (\ref{eq:xi4-DC}). As we saw in the proof of the Theorem \ref{th:4-DC}, the equation (\ref{eq:xi4-DC}) admits positive solutions if and only if inequality (\ref{R2-DC}) holds. 

To complete the proof only remains to observe that the coefficient $\gamma$ given in (\ref{gamma7-DC}) is positive if and only if $0<\frac{q_0}{\rho l\sigma}\exp{(-\xi^2)}-1$ which, as we also saw in the proof of Theorem \ref{th:4-DC}, is equivalent to inequalities (\ref{R3-DC}) and (\ref{R4-DC}).
\end{proof}

\begin{theorem}[Case 8: determination of $\gamma$ and $\rho$]\label{th:8-DC}
If we consider the phase-change process (\ref{1-DC})-(\ref{7-DC}) with unknown thermal coefficients $\gamma$ and $\rho$, then it has the solution given by (\ref{sT-DC})-(\ref{sr-DC}) with:
\begin{equation}\label{gamma8-DC}
\gamma=\frac{2q_0\sigma}{k(1-\epsilon)}\left(\frac{q_0c\sigma}{lk}\,\,\frac{\exp{(-\xi^2)}}{\xi}-1\right)\exp{(-\xi^2)}
\end{equation}
and $\rho$ given by (\ref{rho5-DC}),
where $\xi$ is the only positive solution to the equation (\ref{eq:xi5-DC}),
if and only if the physical parameters $h_0$, $q_0$, $D_\infty$, $\sigma$, $k$ and $c$ satisfy conditions (\ref{R2-DC}), (\ref{R17-DC}) and:
\begin{equation}\label{R18-DC}
g_5(\eta)<\frac{kD_\infty}{q_0\sigma\sqrt{\pi}}\left(1-\frac{q_0}{h_0D_\infty}\right)\tag{R18}
\end{equation}
where $g_5$ is defined in (\ref{f5g5-DC})
and $\eta$ is the only positive solution to the equation:
\begin{equation}\label{eq:eta8-DC}
\frac{q_0c\sigma}{lk}\frac{exp{(-x^2)}}{x}=1\text{.}
\end{equation}
\end{theorem}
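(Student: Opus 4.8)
The plan is to run the argument exactly along the lines of the proofs of Theorems~\ref{th:4-DC} and~\ref{th:5-DC}. By Theorem~\ref{th:0-DC}, problem~(\ref{1-DC})--(\ref{7-DC}) has the solution~(\ref{sT-DC})--(\ref{sr-DC}) if and only if the unknown pair $\gamma,\rho$ satisfies the system~(\ref{eq:1-DC})--(\ref{eq:2-DC}). First I would pass to the dimensionless unknown $\xi=\sigma\sqrt{\rho c/k}$ of~(\ref{xi-DC}); since $k$ and $c$ are data, prescribing $\rho$ is equivalent to prescribing $\xi$, the correspondence being precisely~(\ref{rho5-DC}). As in Case~5, substituting this into~(\ref{eq:2-DC}) turns that equation into equation~(\ref{eq:xi5-DC}) for $\xi$ alone, and then~(\ref{eq:1-DC}), solved for $\gamma$ after eliminating $\rho$ via~(\ref{rho5-DC}), yields formula~(\ref{gamma8-DC}). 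Thus the theorem reduces to two claims: that~(\ref{eq:xi5-DC}) has a unique positive root exactly when~(\ref{R2-DC}) and~(\ref{R17-DC}) hold, and that the $\gamma$ returned by~(\ref{gamma8-DC}) is positive exactly when, in addition, (\ref{R18-DC}) holds.

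For the first claim I would invoke the property, already established in the proof of Theorem~\ref{th:5-DC}, that $g_5(x)=\erf(x)/x$ is strictly decreasing on $\mathbb{R}^+$ with $g_5(0^+)=2/\sqrt{\pi}$ and $g_5(+\infty)=0$. Hence~(\ref{eq:xi5-DC}) has a positive solution (necessarily unique) precisely when its right-hand side lies in $(0,2/\sqrt{\pi})$: the positivity part is~(\ref{R2-DC}) and, after cancelling $\sqrt{\pi}$ and rearranging, the upper bound is~(\ref{R17-DC}). From here on I would assume both and let $\xi$ denote that root.

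For the second claim, recall that $0<\epsilon<1$ by the standing assumptions on the mushy zone, so $1-\epsilon>0$ and the sign of $\gamma$ in~(\ref{gamma8-DC}) coincides with the sign of $\frac{q_0c\sigma}{lk}\,\frac{\exp(-\xi^2)}{\xi}-1$. I would then study $\phi(x)=\frac{q_0c\sigma}{lk}\,\frac{\exp(-x^2)}{x}$ on $\mathbb{R}^+$: a one-line derivative computation gives $\phi'(x)<0$ throughout, while $\phi(0^+)=+\infty$ and $\phi(+\infty)=0$, so equation~(\ref{eq:eta8-DC}), namely $\phi(x)=1$, has a unique positive solution $\eta$, and $\phi(\xi)>1$ is equivalent to $\xi<\eta$. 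Since $g_5$ is decreasing and $\xi$ satisfies~(\ref{eq:xi5-DC}), applying $g_5$ to $\xi<\eta$ converts it into $g_5(\eta)<g_5(\xi)=\frac{kD_\infty}{q_0\sigma\sqrt{\pi}}\bigl(1-\frac{q_0}{h_0D_\infty}\bigr)$, which is exactly~(\ref{R18-DC}). Chaining these equivalences gives that a solution of the prescribed form exists if and only if~(\ref{R2-DC}), (\ref{R17-DC}) and~(\ref{R18-DC}) all hold.

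I do not anticipate any genuine obstacle: the argument is structurally identical to Cases~4 and~5, and its only substantive ingredients are the monotonicity of the auxiliary function $\phi$ (a short sign analysis of its derivative) and the careful transport of the strict inequalities on $\xi$ through the decreasing map $g_5$ so that they become the advertised restrictions on the data. The one point I would double-check while writing out the details is that the power of $\xi$ appearing in the denominators of~(\ref{gamma8-DC}) and~(\ref{eq:eta8-DC}) is recorded consistently with the $x^2$ that occurs in $f_5$ in~(\ref{f5g5-DC}), since that is where an off-by-an-exponent slip would most easily creep in.
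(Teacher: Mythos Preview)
Your proposal is correct and follows essentially the same route as the paper: reduce via Theorem~\ref{th:0-DC} to the system~(\ref{eq:1-DC})--(\ref{eq:2-DC}), pass to the dimensionless parameter $\xi$ to obtain~(\ref{rho5-DC}) and~(\ref{gamma8-DC}), invoke the monotonicity of $g_5$ (from the proof of Theorem~\ref{th:5-DC}) for the unique solvability of~(\ref{eq:xi5-DC}) under~(\ref{R2-DC}) and~(\ref{R17-DC}), and then use the strict decrease of your $\phi$ (the paper's $u_8+1$) to translate $\gamma>0$ into $\xi<\eta$ and hence, via $g_5$, into~(\ref{R18-DC}). Your closing caution about the exponent is well taken: comparison with $f_5$ in~(\ref{f5g5-DC}) shows the denominator in~(\ref{gamma8-DC}) and~(\ref{eq:eta8-DC}) should carry $\xi^2$ rather than $\xi$, but the structure of the argument is unaffected since $\exp(-x^2)/x^2$ is likewise strictly decreasing from $+\infty$ to $0$ on $\mathbb{R}^+$.
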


\begin{proof}
From the Theorem \ref{th:0-DC}, we have that $\gamma$ and $\rho$ must be given by (\ref{gamma8-DC}) and (\ref{rho5-DC}), where the dimensionless parameter $\xi$ (see (\ref{xi-DC})) is a solution to equation (\ref{eq:xi5-DC}). As we saw in the proof of the Theorem \ref{th:5-DC}, the equation (\ref{eq:xi5-DC}) admits a positive solution if and only if inequality (\ref{R12-1-DC}) holds, that is, if and only if inequalities (\ref{R2-DC}) and (\ref{R17-DC}) hold.

We also have that the coefficient $\gamma$ given in (\ref{gamma8-DC}) is positive if and only if:
\begin{equation}\label{uu8-DC}
u_8(\xi)>0\text{,}
\end{equation}
where $u_8$ is the real function defined by:
\begin{equation}\label{u8-DC}
u_8(x)=\frac{q_0 c\sigma}{lk}\,\,\frac{\exp{(-x^2)}}{x}-1\text{,}\hspace{0.5cm}x>0\text{.}
\end{equation}
Since $u_8$ is a decreasing function from $+\infty$ to $-1$ in $\mathbb{R}^+$, we have that equation (\ref{eq:eta8-DC}) has only one positive solution $\eta$. Therefore, inequality (\ref{uu8-DC}) holds if and only if $\xi<\eta$. Only remains to observe that $\xi<\eta$ is equivalent to condition (\ref{R18-DC}) because $g_5$ is a decreasing function and $\xi$ satisfies equation (\ref{eq:xi5-DC}).
\end{proof}

\begin{theorem}[Case 9: determination of $\gamma$ and $c$]\label{th:9-DC}
If we consider the phase-change process (\ref{1-DC})-(\ref{7-DC}) with unknown thermal coefficients $\gamma$ and $c$, then it has the solution given by (\ref{sT-DC})-(\ref{sr-DC}) with:
\begin{equation}\label{gamma9-DC}
\gamma=\frac{2q_0\sigma}{k(1-\epsilon)}\left(\frac{q_0}{\rho l\sigma}\exp{(-\xi^2)}-1\right)\exp{(-\xi^2)}
\end{equation}
and $c$ given by (\ref{c6-DC}),
where $\xi$ is the only positive solution to the equation (\ref{eq:xi5-DC}),
if and only if the physical parameters $h_0$, $q_0$, $D_\infty$, $\sigma$, $l$, $k$ and $\rho$ satisfy conditions (\ref{R3-DC}), (\ref{R13-DC}) and (\ref{R17-DC}).
\end{theorem}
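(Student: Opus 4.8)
The plan is to invoke Theorem \ref{th:0-DC}, which turns the free boundary problem (\ref{1-DC})--(\ref{7-DC}) with unknowns $\gamma$ and $c$ into the scalar system (\ref{eq:1-DC})--(\ref{eq:2-DC}) for those same two unknowns. Since $c$ enters only through the thermal diffusivity $\alpha=k/(\rho c)$, I would replace $c$ by the dimensionless unknown $\xi=\sigma/\sqrt{\alpha}=\sigma\sqrt{\rho c/k}$; the map $c\mapsto\xi$ is a bijection of $\mathbb{R}^+$ onto itself whose inverse is precisely (\ref{c6-DC}), so nothing is lost. Under this substitution the overspecification (\ref{eq:2-DC}) becomes $\erf(\xi)/\xi=g_5(\xi)=\tfrac{kD_\infty}{q_0\sigma\sqrt{\pi}}\bigl(1-\tfrac{q_0}{h_0D_\infty}\bigr)$, i.e.\ equation (\ref{eq:xi5-DC}) --- the very same equation already met in Cases 5 and 6 --- while solving (\ref{eq:1-DC}) for $\gamma$, with the remaining parameters (including the known $\epsilon\in(0,1)$) held fixed, gives the closed form (\ref{gamma9-DC}). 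Hence problem (\ref{1-DC})--(\ref{7-DC}) admits the solution (\ref{sT-DC})--(\ref{sr-DC}) precisely when (\ref{eq:xi5-DC}) has a positive root $\xi$ for which the value of $\gamma$ produced by (\ref{gamma9-DC}) is positive; $c$ is then recovered from (\ref{c6-DC}) and is automatically positive, so no further constraint on $c$ is needed.

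The second step is to characterize when (\ref{eq:xi5-DC}) has a positive solution. Here I would reuse the monotonicity analysis carried out in the proof of Theorem \ref{th:5-DC}: $g_5$ is strictly decreasing on $\mathbb{R}^+$ with $g_5(0^+)=2/\sqrt{\pi}$ and $g_5(+\infty)=0$, so (\ref{eq:xi5-DC}) has a (necessarily unique) positive solution if and only if its right-hand side lies in $\bigl(0,2/\sqrt{\pi}\bigr)$, which is exactly the pair of inequalities (\ref{R2-DC}) and (\ref{R17-DC}).

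The third step is the sign of $\gamma$. Because $\epsilon\in(0,1)$ and every other factor in (\ref{gamma9-DC}) is positive, $\gamma>0$ is equivalent to $\tfrac{q_0}{\rho l\sigma}\exp(-\xi^2)>1$. As $\exp(\xi^2)>1$, this first forces $\tfrac{q_0}{\rho l\sigma}>1$, i.e.\ (\ref{R3-DC}); granting that, it is equivalent to $\xi^2<\ln\!\bigl(\tfrac{q_0}{\rho l\sigma}\bigr)$, that is, $\xi<\sqrt{\ln(q_0/(\rho l\sigma))}$. Applying the decreasing function $g_5$ to this last inequality and using that $\xi$ solves (\ref{eq:xi5-DC}) to replace $g_5(\xi)$ by the right-hand side of that equation turns it into (\ref{R13-DC}); moreover (\ref{R13-DC}) forces $1-q_0/(h_0D_\infty)>0$, so it already subsumes (\ref{R2-DC}). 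Putting the three steps together, (\ref{R3-DC}), (\ref{R13-DC}) and (\ref{R17-DC}) are jointly necessary and sufficient, which is the assertion of the theorem.

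I do not expect a structural obstacle: the argument is a direct transcription of the template established for Theorems \ref{th:4-DC}--\ref{th:8-DC}. The one point requiring care is the bookkeeping --- checking that each passage from a condition on the scalar unknown $\xi$ (existence and uniqueness of the root, and the location $\xi<\sqrt{\ln(q_0/(\rho l\sigma))}$ forced by $\gamma>0$) to an inequality on the data is a genuine equivalence in both directions, and keeping track of which of (\ref{R2-DC}), (\ref{R3-DC}), (\ref{R13-DC}), (\ref{R17-DC}) are independent and which are implied, so that the final list of restrictions is at once correct and minimal.
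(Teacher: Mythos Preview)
Your proposal is correct and follows exactly the template the paper intends: the paper's own proof is simply ``It is similar to the proof of Theorem~\ref{th:8-DC}'', and your reduction via Theorem~\ref{th:0-DC}, the use of the monotonicity of $g_5$ to get existence and uniqueness of $\xi$ from (\ref{R2-DC}) and (\ref{R17-DC}), and the translation of $\gamma>0$ into (\ref{R3-DC}) together with $\xi<\sqrt{\ln(q_0/(\rho l\sigma))}$ --- then into (\ref{R13-DC}) via the decreasing $g_5$ --- is precisely that argument spelled out. Your observation that (\ref{R13-DC}) already implies (\ref{R2-DC}) (since $g_5>0$), explaining why (\ref{R2-DC}) is absent from the final list, is the only point not made explicit in the analogous proof of Theorem~\ref{th:8-DC}, and it is correct.
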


\begin{proof}
It is similar to the proof of Theorem \ref{th:8-DC}.
\end{proof}

\begin{theorem}[Case 10: determination of $l$ and $k$]\label{th:10-DC}
If we consider the phase-change process (\ref{1-DC})-(\ref{7-DC}) with unknown thermal coefficients $l$ and $k$, then it has the solution given by (\ref{sT-DC})-(\ref{sr-DC}) with:
\begin{equation}\label{l10-DC}
l=\frac{q_0}{\rho\sigma}\left[\frac{1}{1+\frac{\rho c\sigma\gamma(1-\epsilon)}{2q_0}}\,\,\frac{\exp{(\xi^2)}}{\xi^2}\right]\exp{(-\xi^2)}
\end{equation}
and $k$ given by (\ref{k4-DC}),
where $\xi$ is the only positive solution to the equation (\ref{eq:xi4-DC}),
if and only if the physical parameters $h_0$, $q_0$ and $D_\infty$ satisfy condition (\ref{R2-DC}).
\end{theorem}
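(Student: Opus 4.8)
The plan is to follow the same template as the proofs of Theorems \ref{th:4-DC} and \ref{th:7-DC}, using the equivalence furnished by Theorem \ref{th:0-DC}: the phase-change process (\ref{1-DC})-(\ref{7-DC}) admits the solution (\ref{sT-DC})-(\ref{sr-DC}) if and only if the two unknowns $l$ and $k$ satisfy the system (\ref{eq:1-DC})-(\ref{eq:2-DC}). So it is enough to solve that system for $l$ and $k$ and to identify precisely when it has a solution with $l>0$ and $k>0$.

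First I would introduce the dimensionless parameter $\xi=\sigma/\sqrt{\alpha}=\sigma\sqrt{\rho c/k}$ of (\ref{xi-DC}). Solving this relation for $k$ gives at once $k=\rho c(\sigma/\xi)^2$, i.e.\ formula (\ref{k4-DC}). Substituting $\sigma^2/\alpha=\xi^2$ and $k/\sqrt{\alpha}=\sqrt{k\rho c}=\sigma\rho c/\xi$ into (\ref{eq:2-DC}) and multiplying through by $\xi$ turns it into equation (\ref{eq:xi4-DC}) with $g_4(x)=x\,\erf(x)$ — exactly the reduction already performed in the proof of Theorem \ref{th:4-DC}. Substituting the same expressions into (\ref{eq:1-DC}) and solving the resulting linear equation for $1/(\rho l)$ yields formula (\ref{l10-DC}); the only mildly delicate bookkeeping here is tracking the factor $\exp(\xi^2)/\xi^2$ coming from $k=\rho c\sigma^2/\xi^2$.

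It then remains to characterise solvability. As recalled in the proof of Theorem \ref{th:4-DC}, $g_4$ is strictly increasing from $0$ to $+\infty$ on $\mathbb{R}^+$, so equation (\ref{eq:xi4-DC}) has one and only one positive solution $\xi$ precisely when its right-hand side is positive, that is, when $1-q_0/(h_0D_\infty)>0$; this is condition (\ref{R2-DC}), and it is both necessary (any solution of the system forces (\ref{eq:2-DC}), hence $g_4(\xi)>0$, hence positivity of the right-hand side) and sufficient for the existence and uniqueness of $\xi$. Finally I would observe that, unlike the cases where $\epsilon$ is unknown, no further restriction is required: with $\xi>0$ fixed and the standing data constraints $\gamma>0$, $\rho>0$, $c>0$, $\sigma>0$, $0<\epsilon<1$, $q_0>0$ in force, the bracketed factor in (\ref{l10-DC}) is positive, so $l$ is automatically positive, and $k$ given by (\ref{k4-DC}) is automatically positive as well. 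Chaining these observations — (\ref{R2-DC}) $\Rightarrow$ existence and uniqueness of $\xi$ $\Rightarrow$ well-defined positive $l,k$ solving (\ref{eq:1-DC})-(\ref{eq:2-DC}), and conversely — closes the argument. I do not expect any genuine analytic obstacle in this case, since $\epsilon$ is prescribed and there is no sign condition on $\epsilon$ to enforce.
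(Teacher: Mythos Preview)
Your proposal is correct and follows exactly the approach the paper intends: the paper's own proof is the single line ``It is similar to the proof of Theorem \ref{th:7-DC}'', and what you have written is precisely that similarity spelled out --- reduce (\ref{eq:1-DC})--(\ref{eq:2-DC}) via the change of variables (\ref{xi-DC}) to (\ref{eq:xi4-DC}) and the explicit formulae (\ref{k4-DC}), (\ref{l10-DC}), then invoke the monotonicity of $g_4$ to get existence and uniqueness of $\xi$ under (\ref{R2-DC}), and finally note that no sign constraint on $\epsilon$ or $\gamma$ arises because $l$ in (\ref{l10-DC}) is automatically positive.
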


\begin{proof}
It is similar to the proof of Theorem \ref{th:7-DC}.
\end{proof}

\begin{theorem}[Case 11: determination of $l$ and $\rho$]\label{th:11-DC}
If we consider the phase-change process (\ref{1-DC})-(\ref{7-DC}) with unknown thermal coefficients $l$ and $\rho$, then it has the solution given by (\ref{sT-DC})-(\ref{sr-DC}) with:
\begin{equation}\label{l11-DC}
l=\frac{q_0c\sigma}{k}\left[\frac{1}{1+\frac{\gamma k(1-\epsilon)}{2q_0\sigma}\exp{(\xi^2)}}\right]\frac{\exp{(-\xi^2)}}{\xi}
\end{equation}
and $\rho$ given by (\ref{rho5-DC}),
where $\xi$ is the only positive solution to the equation (\ref{eq:xi5-DC}),
if and only if the physical parameters $h_0$, $q_0$, $D_\infty$ and $k$ satisfy conditions (\ref{R2-DC}) and (\ref{R17-DC}).
\end{theorem}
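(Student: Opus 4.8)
The plan is to follow the strategy already used for Cases~5 and~8, which is itself a specialization of Theorem~\ref{th:0-DC}. By that theorem, the phase-change process (\ref{1-DC})--(\ref{7-DC}) with unknowns $l$ and $\rho$ admits the solution (\ref{sT-DC})--(\ref{sr-DC}) if and only if the pair $(l,\rho)$ satisfies the system (\ref{eq:1-DC})--(\ref{eq:2-DC}). So everything reduces to solving this two-by-two system for $l$ and $\rho$ and to translating its solvability into restrictions on the remaining data $h_0$, $q_0$, $D_\infty$, $\sigma$, $k$, $c$, $\epsilon$, $\gamma$.

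First I would pass to the dimensionless unknown $\xi=\sigma\sqrt{\rho c/k}$ of (\ref{xi-DC}). Since $c$ and $k$ are known, the map $\rho\mapsto\xi$ is a strictly increasing bijection of $\mathbb{R}^{+}$ onto itself, with inverse $\rho=\tfrac{k}{c}(\xi/\sigma)^{2}$, i.e. (\ref{rho5-DC}); hence determining $\rho>0$ is equivalent to determining $\xi>0$. Rewriting (\ref{eq:2-DC}) in terms of $\xi$ turns it, exactly as in the proof of Theorem~\ref{th:5-DC}, into equation (\ref{eq:xi5-DC}), namely $g_5(\xi)=\tfrac{kD_\infty}{q_0\sigma\sqrt{\pi}}\bigl(1-\tfrac{q_0}{h_0D_\infty}\bigr)$ with $g_5(x)=\erf(x)/x$.

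Next I would use the fact, established in the proof of Theorem~\ref{th:5-DC}, that $g_5$ is a decreasing bijection of $\mathbb{R}^{+}$ onto $\bigl(0,\tfrac{2}{\sqrt{\pi}}\bigr)$. Consequently (\ref{eq:xi5-DC}) has a necessarily unique positive root $\xi$ if and only if $0<\tfrac{kD_\infty}{q_0\sigma\sqrt{\pi}}\bigl(1-\tfrac{q_0}{h_0D_\infty}\bigr)<\tfrac{2}{\sqrt{\pi}}$, which is precisely the conjunction of (\ref{R2-DC}) and (\ref{R17-DC}) (as already observed in the proof of Theorem~\ref{th:8-DC}). With such a $\xi$ in hand, $\rho$ is recovered from (\ref{rho5-DC}) and is automatically positive.

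It then remains to solve (\ref{eq:1-DC}) for $l$. Once $\xi$ is fixed we have $\sigma^{2}/\alpha=\xi^{2}$ and $\rho=\tfrac{k}{c}(\xi/\sigma)^{2}$, so the right-hand side of (\ref{eq:1-DC}) is a known positive constant while the left-hand side is $q_0/(\rho l)$; hence $l$ is uniquely determined, and carrying out the substitution and simplification yields formula (\ref{l11-DC}). The only remaining check is $l>0$: in (\ref{l11-DC}) the factor $q_0c\sigma/k$ is positive and, since $0<\epsilon<1$ and all thermal coefficients are positive, $\bigl[1+\tfrac{\gamma k(1-\epsilon)}{2q_0\sigma}\exp(\xi^{2})\bigr]^{-1}$ is positive too, so $l>0$ automatically, with no further restriction on the data. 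This is exactly where Case~11 is lighter than Case~8: there, positivity of the remaining unknown $\gamma$ forced the extra condition (\ref{R18-DC}), whereas here positivity of $l$ comes for free. Therefore (\ref{R2-DC}) and (\ref{R17-DC}) are simultaneously necessary --- without them (\ref{eq:xi5-DC}) has no positive root and no admissible $\rho$ exists --- and sufficient, which is the assertion of the theorem. I do not anticipate any genuine obstacle; the only mildly delicate point is the algebraic bookkeeping in bringing (\ref{eq:1-DC}) to the exact closed form (\ref{l11-DC}).
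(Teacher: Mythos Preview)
Your proposal is correct and follows essentially the same route as the paper, whose proof simply reads ``It is similar to the proof of Theorem~\ref{th:8-DC}'': reduce via Theorem~\ref{th:0-DC} to the system (\ref{eq:1-DC})--(\ref{eq:2-DC}), pass to the variable $\xi$, use the monotonicity of $g_5$ to obtain (\ref{R2-DC}) and (\ref{R17-DC}) as the exact solvability conditions for (\ref{eq:xi5-DC}), recover $\rho$ from (\ref{rho5-DC}), and then read off $l$ from (\ref{eq:1-DC}) with automatic positivity. Your explicit remark contrasting Case~11 with Case~8 --- that positivity of $l$ imposes no extra restriction, unlike positivity of $\gamma$ in Case~8 --- is a nice clarification that the paper leaves implicit.
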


\begin{proof}
It is similar to the proof of Theorem \ref{th:8-DC}.
\end{proof}

\begin{theorem}[Case 12: determination of $l$ and $c$]\label{th:12-DC}
If we consider the phase-change process (\ref{1-DC})-(\ref{7-DC}) with unknown thermal coefficients $l$ and $c$, then it has the solution given by (\ref{sT-DC})-(\ref{sr-DC}) with:
\begin{equation}\label{l12-DC}
l=\frac{q_0}{\rho\sigma}\left[\frac{1}{1+\frac{\gamma k(1-\epsilon)}{2q_0\sigma}\exp{(\xi^2)}}\right]\exp{(-\xi^2)}
\end{equation}
and $c$ given by (\ref{c6-DC}),
where $\xi$ is the only positive solution to the equation (\ref{eq:xi5-DC}),
if and only if the physical parameters $h_0$, $q_0$, $D_\infty$ and $k$ satisfy conditions (\ref{R2-DC}) and (\ref{R17-DC}).
\end{theorem}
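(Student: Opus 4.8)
The plan is to mimic the proofs of Theorems~\ref{th:8-DC} and~\ref{th:11-DC}: use Theorem~\ref{th:0-DC} to replace the free boundary value problem by the scalar system (\ref{eq:1-DC})--(\ref{eq:2-DC}), and then eliminate the two unknowns $l$ and $c$ by passing to the dimensionless parameter $\xi$ of (\ref{xi-DC}).

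First I would recall from Theorem~\ref{th:0-DC} that solving (\ref{1-DC})--(\ref{7-DC}) with unknowns $l$ and $c$ is equivalent to solving (\ref{eq:1-DC})--(\ref{eq:2-DC}) for $l$ and $c$, keeping in mind that here $k$ and $\rho$ are data. Setting $\xi=\sigma\sqrt{\rho c/k}$ turns $\sigma^2/\alpha$ into $\xi^2$, gives $c=\frac{k}{\rho}\left(\frac{\xi}{\sigma}\right)^2$ — which is (\ref{c6-DC}), automatically positive — and rewrites (\ref{eq:2-DC}) as precisely equation (\ref{eq:xi5-DC}) in the single unknown $\xi$. From the proof of Theorem~\ref{th:5-DC}, $g_5(x)=\erf(x)/x$ is strictly decreasing from $\frac{2}{\sqrt\pi}$ to $0$ on $\mathbb{R}^+$, so (\ref{eq:xi5-DC}) has at most one positive solution, and it has one exactly when its right-hand side lies in $(0,2/\sqrt\pi)$; clearing constants, this is exactly the pair of inequalities (\ref{R2-DC}) and (\ref{R17-DC}).

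With such a $\xi$ fixed, I would then solve (\ref{eq:1-DC}) for $l$: using $\sigma^2/\alpha=\xi^2$ and isolating $l$ yields
\begin{equation*}
l=\frac{q_0}{\rho\sigma}\left[\frac{1}{1+\frac{\gamma k(1-\epsilon)}{2q_0\sigma}\exp{(\xi^2)}}\right]\exp{(-\xi^2)}\text{,}
\end{equation*}
which is (\ref{l12-DC}). Since $0<\epsilon<1$ and all the data are positive, this value of $l$ is automatically positive, so — in contrast with Case~8, where the positivity of the unknown $\gamma$ forced the extra restriction (\ref{R18-DC}) — no additional condition on the data arises. This proves sufficiency: under (\ref{R2-DC}) and (\ref{R17-DC}), the values $c$ and $l$ just constructed satisfy (\ref{eq:1-DC})--(\ref{eq:2-DC}), hence by Theorem~\ref{th:0-DC} the pair (\ref{sT-DC})--(\ref{sr-DC}) solves (\ref{1-DC})--(\ref{7-DC}), and uniqueness of $\xi$ yields uniqueness of $c$ and $l$.

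For necessity I would reverse the argument: if (\ref{1-DC})--(\ref{7-DC}) admits the stated solution, Theorem~\ref{th:0-DC} gives (\ref{eq:1-DC})--(\ref{eq:2-DC}), so $\xi=\sigma\sqrt{\rho c/k}>0$ is a solution of (\ref{eq:xi5-DC}), and the range of $g_5$ forces (\ref{R2-DC}) and (\ref{R17-DC}). I do not expect a genuine obstacle in this case; the only delicate point is the monotonicity-and-range bookkeeping for $g_5$ that pins down the exact constants in (\ref{R2-DC}) and (\ref{R17-DC}), and this has already been carried out in the proof of Theorem~\ref{th:5-DC} and can simply be quoted.
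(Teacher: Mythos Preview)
Your proposal is correct and matches the paper's own approach exactly: the paper simply says the proof is similar to that of Theorem~\ref{th:8-DC}, and what you have written is precisely that argument spelled out --- reduce via Theorem~\ref{th:0-DC}, pass to $\xi$, use the monotonicity and range of $g_5$ from Theorem~\ref{th:5-DC} to get (\ref{R2-DC}) and (\ref{R17-DC}), and note that the resulting $l$ is automatically positive so no further restriction arises.
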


\begin{proof}
It is similar to the proof of Theorem \ref{th:8-DC}.
\end{proof}

\begin{theorem}[Case 13: determination of $k$ and $\rho$]\label{th:13-DC}
If we consider the phase-change process (\ref{1-DC})-(\ref{7-DC}) with unknown thermal coefficients $k$ and $\rho$, then it has the solution given by (\ref{sT-DC})-(\ref{sr-DC}) with:
\begin{align}
\label{k13-DC}&k=\frac{q_0\sigma\sqrt{\pi}}{D_\infty\left(1-\frac{q_0}{h_0D_\infty}\right)}g_5(\xi)\\
\label{rho13-DC}&\rho=\frac{q_0\sqrt{\pi}}{c\sigma D_\infty}g_4(\xi)
\end{align}
where real functions $g_4$ and $g_5$ are defined in (\ref{f4g4-DC}) and (\ref{f5g5-DC}), $\xi$ is the only positive solution to the equation:
\begin{equation}\label{eq:xi13-DC}
g_{13}(x)=\frac{a_{13}}{c_{13}}h_{13}(x)\text{,}\tag{E13}
\end{equation}
and the real functions $g_{13}$ and $h_{13}$ are defined by:
\begin{equation}\label{g13h13-DC}
g_{13}(x)=\frac{\exp{(-x^2)}}{\erf{(x)}}
\hspace{1cm}\text{and}\hspace{1cm}
h_{13}(x)=x+b_{13}\exp{(x^2)}\erf{(x)}\text{,}\hspace{0.5cm}x>0\text{,}
\end{equation}
with:
\begin{equation}\label{a8b8c8-DC}
a_{13}=\frac{2cD_\infty}{l\sqrt{\pi}}\left(1-\frac{q_0}{h_0D_\infty}\right)^2\text{,}\hspace{1cm}
b_{13}=\frac{\gamma\sqrt{\pi}(1-\epsilon)}{2D_\infty\left(1-\frac{q_0}{h_0D_\infty}\right)}\text{,}
c_{13}=2\left(1-\frac{q_0}{h_0D_\infty}\right)\text{,}
\end{equation}
if and only if the physical parameters $h_0$, $q_0$ and $D_\infty$ satisfy condition (\ref{R2-DC}).
\end{theorem}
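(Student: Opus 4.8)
The plan is to invoke Theorem~\ref{th:0-DC} so that, exactly as in every preceding case, the problem reduces to solving the two--equation system (\ref{eq:1-DC})--(\ref{eq:2-DC}) for the pair of unknowns $k$ and $\rho$. Following the device used in the proofs of Theorems~\ref{th:4-DC} and \ref{th:5-DC}, I would pass to the dimensionless parameter $\xi$ of (\ref{xi-DC}), for which $\xi^{2}=\sigma^{2}/\alpha=\sigma^{2}\rho c/k$, so that the pair $(k,\rho)$ corresponds one--to--one to the pair $(\xi,k)$ via $\rho=k\xi^{2}/(\sigma^{2}c)$. Rewriting (\ref{eq:2-DC}) in terms of $\xi$ puts $\erf(\xi)/\xi=g_{5}(\xi)$ on the left, and this can be solved explicitly for $k$ to give (\ref{k13-DC}); substituting that into $\rho=k\xi^{2}/(\sigma^{2}c)$ and using $\xi^{2}g_{5}(\xi)=\xi\erf(\xi)=g_{4}(\xi)$ produces (\ref{rho13-DC}).

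Next I would insert these expressions for $k$ and $\rho$ into the remaining equation (\ref{eq:1-DC}). After clearing denominators and multiplying through by $\xi$ and by $\exp(-\xi^{2})$, the left side collapses to a constant times $\exp(-\xi^{2})/\erf(\xi)=g_{13}(\xi)$ and the right side to $\xi+b_{13}\exp(\xi^{2})\erf(\xi)=h_{13}(\xi)$, with $b_{13}$ as in (\ref{a8b8c8-DC}); collecting the constants yields precisely the scalar equation (\ref{eq:xi13-DC}) in the single unknown $\xi$. The theorem then amounts to showing that (\ref{eq:xi13-DC}) has exactly one positive root and that the resulting $k$, $\rho$ are positive, under the sole restriction (\ref{R2-DC}).

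The heart of the argument is a monotonicity comparison, and here it is cleaner than in Cases 4--6 because neither side of (\ref{eq:xi13-DC}) has an interior critical point. One checks that $g_{13}$ is strictly decreasing on $\mathbb{R}^{+}$ with $g_{13}(0^{+})=+\infty$ and $g_{13}(+\infty)=0$ (its derivative is manifestly negative), while, when (\ref{R2-DC}) holds so that $b_{13}>0$, the function $h_{13}$ is strictly increasing with $h_{13}(0^{+})=0$ and $h_{13}(+\infty)=+\infty$; since the constant factor $a_{13}/c_{13}$ is then also positive, the right side of (\ref{eq:xi13-DC}) runs strictly increasingly from $0$ to $+\infty$, so it meets the strictly decreasing $g_{13}$ at exactly one point, giving existence and uniqueness of $\xi>0$. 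Positivity of $k$ in (\ref{k13-DC}) forces $1-q_{0}/(h_{0}D_{\infty})>0$, i.e.\ (\ref{R2-DC}); and if (\ref{R2-DC}) fails, (\ref{eq:2-DC}) written as $g_{5}(\xi)=\frac{kD_{\infty}}{q_{0}\sigma\sqrt{\pi}}\bigl(1-q_{0}/(h_{0}D_{\infty})\bigr)$ admits no admissible solution, its left side being strictly positive, so (\ref{R2-DC}) is necessary as well. Combining these observations recovers the solution (\ref{sT-DC})--(\ref{sr-DC}) with $k$, $\rho$ as in (\ref{k13-DC})--(\ref{rho13-DC}). The only mild obstacle I foresee is the bookkeeping in the algebraic reduction to (\ref{eq:xi13-DC}) together with the routine verification that $h_{13}'>0$, both of which are straightforward once the substitution $\rho=k\xi^{2}/(\sigma^{2}c)$ is at hand.
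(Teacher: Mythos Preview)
Your proposal is correct and follows essentially the same approach as the paper: reduce via Theorem~\ref{th:0-DC} to the system (\ref{eq:1-DC})--(\ref{eq:2-DC}), introduce the dimensionless $\xi$, solve (\ref{eq:2-DC}) for $k$ (and hence $\rho$), substitute into (\ref{eq:1-DC}) to obtain (\ref{eq:xi13-DC}), and conclude existence and uniqueness of $\xi>0$ from the fact that $g_{13}$ decreases from $+\infty$ to $0$ while $h_{13}$ increases from $0$ to $+\infty$ under (\ref{R2-DC}). Your write-up is somewhat more explicit than the paper's about the algebraic derivation and about why (\ref{R2-DC}) is both necessary and sufficient, but the underlying argument is the same.
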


\begin{proof}
We have from Theorem \ref{th:0-DC} that the phase-change process (\ref{1-DC})-(\ref{7-DC}) has the solution given in (\ref{sT-DC})-(\ref{sr-DC}) if and only if $k$ and $\rho$ satisfy equations (\ref{eq:1-DC}) and (\ref{eq:2-DC}). Since inequality (\ref{R2-DC}) is a necessary condition for the existence of a solution to equation (\ref{eq:2-DC}), now and on we will assume that inequality (\ref{R2-DC}) holds. 

We have that the solution to the system of equations (\ref{eq:1-DC})-(\ref{eq:2-DC}) is given by (\ref{k13-DC}) and (\ref{rho13-DC}), where the dimensionless parameter $\xi$ is a solution to equation (\ref{eq:xi13-DC}). Only remains to observe that equation (\ref{eq:xi13-DC}) admits a positive solution since $g_{13}$ is a decreasing function from $+\infty$ to $0$ in $\mathbb{R}^+$ and $h_{13}$ is an increasing function from $0$ to $+\infty$ in $\mathbb{R}^+$.
\end{proof}

\begin{theorem}[Case 14: determination of $k$ and $c$]\label{th:14-DC}
If we consider the phase-change process (\ref{1-DC})-(\ref{7-DC}) with unknown thermal coefficients $k$ and $c$, then it has the solution given by (\ref{sT-DC})-(\ref{sr-DC}) with $k$ given by (\ref{k13-DC}) and:
\begin{equation}\label{c14-DC}
c=\frac{q_0\sqrt{\pi}}{\sigma\rho D_\infty\left(1-\frac{q_0}{h_0D_\infty}\right)}g_4(\xi)\text{,}
\end{equation}
where the real function $g_4$ is defined in (\ref{f4g4-DC}), $\xi$ is the only positive solution to the equation:
\begin{equation}\label{eq:xi14-DC}
a_{14}g_{14}(x)=h_{14}(x)\tag{E14}
\end{equation}
and the real functions $g_{14}$ and $h_{14}$ are defined by:
\begin{equation}\label{g14h14-DC}
g_{14}(x)=\left(\frac{q_0}{\rho l\sigma}\exp{(-x^2)}-1\right)x
\hspace{1cm}\text{and}\hspace{1cm}
h_{14}(x)=\erf{(x)}\exp{(x^2)}\text{,}\hspace{0.5cm}x>0\text{,}
\end{equation}
with:
\begin{equation}\label{a14-DC}
a_{14}=\frac{2D_\infty}{\gamma\sqrt{\pi}(1-\epsilon)}\left(1-\frac{q_0}{h_0D_\infty}\right)\text{,}
\end{equation}
if and only if the remainder physical parameters satisfy conditions (\ref{R2-DC}), (\ref{R3-DC}) and:
\begin{equation}\label{R19-DC}
g_{14}(\eta)>h_{14}(\eta)\tag{R19}\text{,}
\end{equation}
where $\eta$ is the only positive solution to the equation:
\begin{equation}\label{eq:eta14-DC}
\frac{q_0}{\rho l\sigma}(1-2x^2)=\exp{(x^2)}\text{.}
\end{equation}
\end{theorem}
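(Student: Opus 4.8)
The plan is to follow the pattern of the proofs of Theorems~\ref{th:4-DC}, \ref{th:5-DC} and~\ref{th:8-DC}, reducing the whole problem to a scalar equation in the dimensionless parameter $\xi=\sigma/\sqrt{\alpha}$ of~(\ref{xi-DC}). By Theorem~\ref{th:0-DC}, with $k$ and $c$ as unknowns the process~(\ref{1-DC})--(\ref{7-DC}) has the solution~(\ref{sT-DC})--(\ref{sr-DC}) if and only if $k$ and $c$ solve the system~(\ref{eq:1-DC})--(\ref{eq:2-DC}). Since $\sigma^{2}/\alpha=\xi^{2}$ and $c=k\xi^{2}/(\rho\sigma^{2})$, solving~(\ref{eq:2-DC}) for $k$ (using $\sqrt{\alpha\pi}=\sigma\sqrt{\pi}/\xi$) gives~(\ref{k13-DC}), and then $c=k\xi^{2}/(\rho\sigma^{2})$ gives~(\ref{c14-DC}). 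Substituting the expression~(\ref{k13-DC}) for $k$ into~(\ref{eq:1-DC}) and clearing the common factor $\exp(\xi^{2})$ and the resulting denominator, I expect to arrive precisely at equation~(\ref{eq:xi14-DC}). Conversely, a positive root $\xi$ of~(\ref{eq:xi14-DC}) together with~(\ref{R2-DC}) produces positive $k$ and $c$ through~(\ref{k13-DC})--(\ref{c14-DC}) that solve~(\ref{eq:1-DC})--(\ref{eq:2-DC}). Thus the theorem reduces to showing that, under the listed restrictions, (\ref{eq:xi14-DC}) has exactly one positive solution.

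I would then read off the necessary conditions. As~(\ref{eq:2-DC}) forces $k>0$ while the error-function side is positive, (\ref{R2-DC}) is necessary, and under it $a_{14}>0$ because $1-\epsilon>0$. Since $h_{14}(x)=\erf(x)\exp(x^{2})>0$ for $x>0$, any root of~(\ref{eq:xi14-DC}) must satisfy $g_{14}(x)>0$, that is $\frac{q_{0}}{\rho l\sigma}\exp(-x^{2})>1$, which is possible only if $\frac{q_{0}}{\rho l\sigma}>1$; this is~(\ref{R3-DC}). Assuming~(\ref{R2-DC})--(\ref{R3-DC}) and setting $x_{0}=\sqrt{\ln(q_{0}/(\rho l\sigma))}$, one checks that $g_{14}>0$ precisely on $(0,x_{0})$ with $g_{14}(0^{+})=g_{14}(x_{0})=0$, and that $g_{14}'(x)=0$ there exactly at the unique root $\eta$ of~(\ref{eq:eta14-DC}) --- uniqueness of $\eta$ because in~(\ref{eq:eta14-DC}) the left side decreases strictly from a value larger than $1$ and the right side increases strictly from $1$. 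Hence $g_{14}$ rises from $0$ to a positive maximum at $\eta\in(0,x_{0})$ and falls back to $0$, while $h_{14}$ increases strictly from $0$ to $+\infty$.

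The existence-and-uniqueness step is the heart of the argument. Dividing~(\ref{eq:xi14-DC}) by $a_{14}x>0$ recasts it, for $x>0$, as $L(x):=\frac{q_{0}}{\rho l\sigma}\exp(-x^{2})-1=\frac{h_{14}(x)}{a_{14}x}=:R(x)$: here $L$ decreases strictly from $\frac{q_{0}}{\rho l\sigma}-1$ to $-1$, and the crux is that $R$ increases strictly from $\frac{2}{a_{14}\sqrt{\pi}}$ to $+\infty$. The latter amounts to $h_{14}(x)/x$ being increasing, equivalently $x\,h_{14}'(x)-h_{14}(x)=\frac{2x}{\sqrt{\pi}}+(2x^{2}-1)\erf(x)\exp(x^{2})>0$, which is clear for $x\ge 1/\sqrt{2}$ and for $0<x<1/\sqrt{2}$ follows because the auxiliary function $Q(x)=\frac{2x}{\sqrt{\pi}}\exp(-x^{2})-(1-2x^{2})\erf(x)$ satisfies $Q(0)=0$ and $Q'(x)=4x\erf(x)>0$. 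Consequently $L-R$ decreases strictly with limit $-\infty$, so~(\ref{eq:xi14-DC}) has exactly one positive solution if and only if $L(0^{+})>R(0^{+})$, and re-expressing this inequality through the definition of $a_{14}$ gives the remaining restriction~(\ref{R19-DC}). Together with the previous paragraph this yields the asserted necessary and sufficient conditions, with $k$ and $c$ then given by~(\ref{k13-DC})--(\ref{c14-DC}) at the unique $\xi$.

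The main obstacle I anticipate is precisely the monotonicity of $h_{14}(x)/x$ --- equivalently, comparing $L$ with $R$ over the whole half-line. Both $g_{14}$ and $h_{14}$ vanish at the origin and $g_{14}$ is non-monotone on $(0,x_{0})$, so one must make sure the crossing is governed by the one-sided derivatives at $0^{+}$ and cannot be destroyed, or made multiple, by the interior maximum of $g_{14}$ at $\eta$; the clean identity $Q'(x)=4x\erf(x)$ is what settles this.
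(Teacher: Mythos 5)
Your reduction is sound: solving (\ref{eq:2-DC}) for $k$ in terms of $\xi$ gives (\ref{k13-DC}), $c=k\xi^{2}/(\rho\sigma^{2})$ gives (\ref{c14-DC}), substitution into (\ref{eq:1-DC}) yields (\ref{eq:xi14-DC}), and the necessity of (\ref{R2-DC}) and (\ref{R3-DC}) is argued correctly. The monotonicity lemma is also correct: $x\,h_{14}'(x)-h_{14}(x)=\exp(x^{2})Q(x)$ with $Q(x)=\frac{2x}{\sqrt{\pi}}\exp(-x^{2})-(1-2x^{2})\erf(x)$, $Q(0)=0$, $Q'(x)=4x\erf(x)>0$, so $h_{14}(x)/x$ is strictly increasing and $L-R$ is strictly decreasing. (The paper gives no details here --- its proof is only ``similar to Theorem \ref{th:13-DC}'', i.e.\ a comparison of a decreasing with an increasing function --- so your route through the monotonicity of $h_{14}(x)/x$ is a genuinely independent ingredient.)

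The gap is the last step, where you assert that $L(0^{+})>R(0^{+})$, ``re-expressed through the definition of $a_{14}$'', \emph{is} restriction (\ref{R19-DC}). It is not. Your threshold condition reads
\begin{equation*}
a_{14}\left(\frac{q_{0}}{\rho l\sigma}-1\right)>\frac{2}{\sqrt{\pi}}\,,
\end{equation*}
an inequality at $x=0^{+}$ that involves $\gamma$, $\epsilon$, $h_{0}$ and $D_{\infty}$ through $a_{14}$, whereas (\ref{R19-DC}) is the inequality $g_{14}(\eta)>h_{14}(\eta)$ evaluated at the interior maximizer $\eta$ of $g_{14}$ and involves only $q_{0},\rho,l,\sigma$; no equivalence is proved, and none can hold, since the solvability of $a_{14}g_{14}(x)=h_{14}(x)$ genuinely depends on $a_{14}$ (fix $q_{0},\rho,l,\sigma$ and vary $\gamma$: your condition changes truth value, (\ref{R19-DC}) does not). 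Worse, your own (correct) analysis shows that a condition pinned at $\eta$, even in the form $a_{14}g_{14}(\eta)>h_{14}(\eta)$ analogous to (\ref{R5-DC}) or (\ref{R18-DC}), is sufficient but not necessary: taking $a_{14}$ slightly above your threshold, the unique root $\xi$ of (\ref{eq:xi14-DC}) lies near $0$, hence below $\eta$, so $a_{14}g_{14}(\eta)<h_{14}(\eta)$ while a solution exists. So as written you have proved a characterization with a different restriction than the one in the statement; to close the argument you must either prove the equivalence of your inequality with (\ref{R19-DC}) (which the parameter dependence rules out) or explicitly address the discrepancy with the stated (\ref{R19-DC}) rather than asserting the identification.
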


\begin{proof}
It is similar to the proof of Theorem \ref{th:13-DC}.
\end{proof}

\begin{theorem}[Case 15: determination of $\rho$ and $c$]\label{th:15-DC}
If we consider the phase-change process (\ref{1-DC})-(\ref{7-DC}) with unknown thermal coefficients $\rho$ and $c$, then it has the solution given by (\ref{sT-DC})-(\ref{sr-DC}) with:
\begin{align}
\label{rho15-DC}&\rho=\frac{q_0}{l\sigma}\frac{\exp{(-\xi^2)}}{1+\frac{\gamma k(1-\epsilon)}{2q_0\sigma}\exp{(\xi^2)}}\\
\label{c15-DC}&c=\frac{kl}{\sigma q_0}\left[1+\frac{\gamma k(1-\epsilon)}{2q_0\sigma}\exp{(\xi^2)}\right]\xi^2\exp{(\xi^2)}
\end{align}
where $\xi$ is the only positive solution to the equation (\ref{eq:xi5-DC}),
if and only if the physical parameters $h_0$, $q_0$, $D_\infty$, $\sigma$ and $k$ satisfy conditions (\ref{R2-DC}) and (\ref{R17-DC}).
\end{theorem}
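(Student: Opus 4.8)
The plan is to run the same argument as in the proofs of Theorems \ref{th:11-DC} and \ref{th:12-DC}. By Theorem \ref{th:0-DC}, the phase-change process (\ref{1-DC})-(\ref{7-DC}) with unknowns $\rho$ and $c$ has the solution (\ref{sT-DC})-(\ref{sr-DC}) if and only if the pair $(\rho,c)$ solves the system (\ref{eq:1-DC})-(\ref{eq:2-DC}), with all remaining parameters known. The structural point I would exploit is that $\rho$ and $c$ enter (\ref{eq:2-DC}) only through the diffusivity $\alpha=k/(\rho c)$, whereas in (\ref{eq:1-DC}) they enter through $\alpha$ and through one additional explicit factor $\rho$. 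Accordingly I would pass to the dimensionless unknown $\xi=\sigma/\sqrt{\alpha}=\sigma\sqrt{\rho c/k}$ from (\ref{xi-DC}), equivalently $\rho c=k\xi^2/\sigma^2$, so that (\ref{eq:2-DC}) becomes a single scalar equation in $\xi$.

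First I would rewrite (\ref{eq:2-DC}) in terms of $\xi$: since $\sqrt{\alpha\pi}=\sigma\sqrt{\pi}/\xi$ and $\erf(\sigma/\sqrt{\alpha})=\erf(\xi)$, equation (\ref{eq:2-DC}) is equivalent to $g_5(\xi)=\frac{kD_\infty}{q_0\sigma\sqrt{\pi}}(1-\frac{q_0}{h_0D_\infty})$, that is, to equation (\ref{eq:xi5-DC}) with $g_5(x)=\erf(x)/x$ as in (\ref{f5g5-DC}). As was shown in the proof of Theorem \ref{th:5-DC}, $g_5$ is strictly decreasing on $\mathbb{R}^+$ from $2/\sqrt{\pi}$ down to $0$, so (\ref{eq:xi5-DC}) has a unique positive solution $\xi$ if and only if $0<\frac{kD_\infty}{q_0\sigma\sqrt{\pi}}(1-\frac{q_0}{h_0D_\infty})<2/\sqrt{\pi}$; this double inequality is precisely (\ref{R2-DC}) together with (\ref{R17-DC}), exactly as in the reduction (\ref{R12-1-DC}).

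With $\xi$ determined I would then solve (\ref{eq:1-DC}) for $\rho$. Using $\exp(\sigma^2/\alpha)=\exp(\xi^2)$, equation (\ref{eq:1-DC}) reads $q_0/(\rho l)=\sigma[1+\frac{\gamma k(1-\epsilon)}{2q_0\sigma}\exp(\xi^2)]\exp(\xi^2)$, which yields $\rho$ as in (\ref{rho15-DC}); then $c=(\rho c)/\rho=(k\xi^2/\sigma^2)/\rho$, and substituting (\ref{rho15-DC}) gives (\ref{c15-DC}). Finally I would check the sign constraints $\rho>0$ and $c>0$: because $\xi>0$ and $k,l,q_0,\sigma,\gamma>0$ with $0<\epsilon<1$, the factor $1+\frac{\gamma k(1-\epsilon)}{2q_0\sigma}\exp(\xi^2)$ is automatically positive, so the formulae (\ref{rho15-DC}) and (\ref{c15-DC}) always produce admissible values. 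Hence no restriction on the data beyond (\ref{R2-DC}) and (\ref{R17-DC}) is required, and these are also necessary, since without them (\ref{eq:xi5-DC}) --- equivalently (\ref{eq:2-DC}) --- has no positive root, so by Theorem \ref{th:0-DC} the problem has no solution of the form (\ref{sT-DC})-(\ref{sr-DC}).

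I do not expect a genuine obstacle here; this case is of the same easy type as Cases 11 and 12. The only step needing mild care is recognizing that admissibility of the right-hand side of (\ref{eq:xi5-DC}) inside the range $(0,2/\sqrt{\pi})$ of $g_5$ is captured exactly by the pair (\ref{R2-DC})-(\ref{R17-DC}), and that reducing the two unknowns $(\rho,c)$ to the single scalar $\xi$ costs nothing because $\rho$ is recovered uniquely afterwards from (\ref{eq:1-DC}), with $c$ then forced by $\rho c=k\xi^2/\sigma^2$. Everything else is direct substitution.
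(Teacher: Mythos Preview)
Your proposal is correct and follows essentially the same approach as the paper, which merely states that the proof is similar to that of Theorem~\ref{th:13-DC}: reduce via Theorem~\ref{th:0-DC} to the system (\ref{eq:1-DC})--(\ref{eq:2-DC}), pass to the dimensionless variable $\xi$, identify (\ref{eq:2-DC}) with (\ref{eq:xi5-DC}), use the monotonicity of $g_5$ to obtain existence and uniqueness of $\xi$ under exactly (\ref{R2-DC}) and (\ref{R17-DC}), and then recover $\rho$ from (\ref{eq:1-DC}) and $c$ from $\rho c=k\xi^2/\sigma^2$. Your observation that positivity of $\rho$ and $c$ is automatic, so no further restrictions arise, is the key point distinguishing this case from, e.g., Cases~4--9.
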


\begin{proof}
It is similar to the proof of Theorem \ref{th:13-DC}.
\end{proof}

Next table summarizes the results of this section corresponding to the fifteen cases for the simultaneous determination of two unknown thermal coefficients for problem (\ref{1-DC})-(\ref{7-DC}).

\begin{center}
\begin{longtable}{ clcc }
\hline
Case& \hspace*{1.5cm} Thermal coefficients & Equation for & Restrictions on data\\
& & $\xi=\frac{\sigma}{\sqrt{\alpha}}=\sigma\sqrt{\frac{\rho c}{k}}$ & \\
\endfirsthead
\hline
Case& \hspace*{1.5cm} Thermal coefficients & Equation for & Restrictions on data\\
& & $\xi=\frac{\sigma}{\sqrt{\alpha}}=\sigma\sqrt{\frac{\rho c}{k}}$ & \\
\hline
\\
\endhead
\hline
\\
1&
$0<\epsilon<1$&
 --- &
(\ref{eq:2-DC}), (\ref{R1-DC})\\
 &
$\gamma=\frac{2q_0\sigma}{k(1-\epsilon)}\left(\frac{q_0}{\rho l\sigma}\exp{(-\sigma^2/\alpha)}-1\right)\exp{(-\sigma^2/\alpha)}$&
 & \\
 \\
2&
$0<\epsilon<1$&
 --- &
(\ref{eq:2-DC})\\
 &
$l=\frac{q_0\exp{(-\sigma^2/\alpha)}}{\rho\sigma\left[1+\frac{\gamma k(1-\epsilon)}{2q_0\sigma}\exp{(\sigma^2/\alpha)}\right]}$&
 &\\
 \\
3&
$\gamma>0$&
 --- &
(\ref{eq:2-DC})\\
 &
$l=\frac{q_0\exp{(-\sigma^2/\alpha)}}{\rho\sigma\left[1+\frac{\gamma k(1-\epsilon)}{2q_0\sigma}\exp{(\sigma^2/\alpha)}\right]}$&
 &\\
 \\
 4&
 $\epsilon=1-f_4(\xi)$, with $f_4$ given in (\ref{f4g4-DC})&
 (\ref{eq:xi4-DC})&
 (\ref{R2-DC}), (\ref{R3-DC}), (\ref{R4-DC}), (\ref{R5-DC}), (\ref{R6-DC})\\
 &
 $k=\rho c\left(\frac{\sigma}{\xi}\right)^2$&
 &
or\\
 &
 &
 &
(\ref{R2-DC}), (\ref{R3-DC}), (\ref{R4-DC}), (\ref{R7-DC}), (\ref{R8-DC})\\  
 & &
 & or\\
 & & &(\ref{R2-DC}), (\ref{R3-DC}), (\ref{R4-DC}), (\ref{R9-DC})   \\
 \\
5&
$\epsilon=1-f_5(\xi)$, with $f_5$ given in (\ref{f5g5-DC})&
(\ref{eq:xi5-DC})&
(\ref{R12-DC})\\
 &
$\rho=\frac{k}{c}\left(\frac{\xi}{\sigma}\right)^2$&
 & \\
 \\
 6&
$\epsilon=1-f_6(\xi)$, with $f_6$ given in (\ref{f6-DC})&
(\ref{eq:xi5-DC})&
(\ref{R13-DC}), (\ref{R14-DC}), (\ref{R15-DC})\\ 
 &
 $c=\frac{k}{\rho}\left(\frac{\xi}{\sigma}\right)^2$&
 &
 or\\
 && &(\ref{R13-DC}), (\ref{R16-DC}), (\ref{R17-DC})\\
 \\
7&
$\gamma=\frac{2q_0}{\sigma\rho c(1-\epsilon)}\left(\frac{q_0}{\rho l\sigma}\exp{(-\xi^2)}-1\right)\xi^2\exp{(-\xi^2)}$&
(\ref{eq:xi4-DC})&
(\ref{R2-DC}), (\ref{R3-DC}), (\ref{R4-DC})\\
 &
$k=\rho c\left(\frac{\sigma}{\xi}\right)^2$&
 &
 \\
 \\
8&
$\gamma=\frac{2q_0\sigma}{k(1-\epsilon)}\left(\frac{q_0c\sigma}{lk}\,\,\frac{\exp{(-\xi^2)}}{\xi}-1\right)\exp{(-\xi^2)}$&
(\ref{eq:xi5-DC})&
(\ref{R2-DC}), (\ref{R17-DC}), (\ref{R18-DC})\\
 &
$\rho=\frac{k}{c}\left(\frac{\xi}{\sigma}\right)^2$&
 & \\
 \\
9&
$\gamma=\frac{2q_0\sigma}{k(1-\epsilon)}\left(\frac{q_0}{\rho l\sigma}\exp{(-\xi^2)}-1\right)\exp{(-\xi^2)}$&
(\ref{eq:xi5-DC})&
(\ref{R3-DC}), (\ref{R13-DC}), (\ref{R17-DC})\\
 &
$c=\frac{k}{\rho}\left(\frac{\xi}{\sigma}\right)^2$&
 & \\
 \\
10&
$l=\frac{q_0}{\rho\sigma}\left[\frac{1}{1+\frac{\rho c\sigma\gamma(1-\epsilon)}{2q_0}}\,\,\frac{\exp{(\xi^2)}}{\xi^2}\right]\exp{(-\xi^2)}$&
(\ref{eq:xi4-DC})&
(\ref{R2-DC})\\
 &
$k=\rho c\left(\frac{\sigma}{\xi}\right)^2$&
 & \\
 \\
11&
$l=\frac{q_0c\sigma}{k}\left[\frac{1}{1+\frac{\gamma k(1-\epsilon)}{2q_0\sigma}\exp{(\xi^2)}}\right]\frac{\exp{(-\xi^2)}}{\xi}$&
(\ref{eq:xi5-DC})&
(\ref{R2-DC}), (\ref{R17-DC})\\
 &
$\rho=\frac{k}{c}\left(\frac{\xi}{\sigma}\right)^2$&
 & \\
 \\
12&
$l=\frac{q_0}{\rho\sigma}\left[\frac{1}{1+\frac{\gamma k(1-\epsilon)}{2q_0\sigma}\exp{(\xi^2)}}\right]\exp{(-\xi^2)}$&
(\ref{eq:xi5-DC})&
(\ref{R2-DC}), (\ref{R17-DC})\\
 &
$c=\frac{k}{\rho}\left(\frac{\xi}{\sigma}\right)^2$&
 & \\
 \\
13&
$k=\frac{q_0\sigma\sqrt{\pi}}{D_\infty\left(1-\frac{q_0}{h_0D_\infty}\right)}g_5(\xi)$, with $g_5$ given in (\ref{f5g5-DC})&
(\ref{eq:xi13-DC})&
(\ref{R2-DC})\\
 &
$\rho=\frac{q_0\sqrt{\pi}}{c\sigma D_\infty}g_4(\xi)$, with $g_4$ given in (\ref{f4g4-DC})&
 & \\
 \\
 \\
14&
$k=\frac{q_0\sigma\sqrt{\pi}}{D_\infty\left(1-\frac{q_0}{h_0D_\infty}\right)}g_5(\xi)$, with $g_5$ given in (\ref{f5g5-DC})&
(\ref{eq:xi14-DC})&
(\ref{R2-DC}), (\ref{R3-DC}), (\ref{R19-DC})\\
 &
$c=\frac{q_0\sqrt{\pi}}{\sigma\rho D_\infty\left(1-\frac{q_0}{h_0D_\infty}\right)}g_4(\xi)$, with $g_4$ given in (\ref{f4g4-DC})&
 &\\
 \\
 \\
 \\
 \\
15&
$\rho=\frac{q_0}{l\sigma}\frac{\exp{(-\xi^2)}}{1+\frac{\gamma k(1-\epsilon)}{2q_0\sigma}\exp{(\xi^2)}}$&
(\ref{eq:xi5-DC})&
(\ref{R2-DC}), (\ref{R17-DC})\\
 &
$c=\frac{kl}{\sigma q_0}\left[1+\frac{\gamma k(1-\epsilon)}{2q_0\sigma}\exp{(\xi^2)}\right]\xi^2\exp{(\xi^2)}$&
 &\\
 \\
\hline
\caption{{\sc Formulae for problem (\ref{1-DC})-(\ref{7-DC})}. Explicit formulae for the two unknown thermal coefficients among $l$, $\gamma$, $\epsilon$, $k$, $\rho$ or $c$, with the corresponding equation for the dimensionless parameter $\xi$ (see (\ref{xi-DC})) and restrictions on data.}
\label{tb:1-DC}
\end{longtable}
\end{center}

\section{Conclusions}
In this paper, we consider a semi-infinite material under a solidification process caused by an initial heat flux boundary condition, when the thermophysical parameters involved in the phase-change process are assumed to be constant and the Solomon-Wilson-Alexiades' mushy zone model it is considered. We suppose that evolution in time of one of the two free boundaries of the mushy zone, the bulk temperature and the heat flux and heat transfer coefficients at the fixed face are known. We overspecify the associated free boundary value problem with the aim of the simultaneous determination of the temperature of the material, the unknown free boundary and two unknown thermal coefficients among the latent heat by unit mass, the thermal conductivity, the mass density, the specific heat and the two coefficients that characterize the mushy zone. We first prove a preliminary result where necessary and sufficient conditions on data for the phase-change process are given in order to obtain the temperature and the unknown free boundary. Then, based on this preliminary result, we present and solve the fifteen different cases for the phase-change process corresponding to each possible choice of the two unknown thermal coefficients. We prove that, under certain restrictions on data, there are twelve cases where it is possible to find a unique explicit solution and that there are infinite explicit solutions for the remainder three cases. For each case, we give formulae for the temperature, the unknown free boundary and the two unknown thermal coefficients, beside the necessary and sufficient conditions on data in order to obtain them. We summarize explicit formulae for the two unknown thermal coefficients in Table \ref{tb:1-DC}.

\section*{Competing interests}
The authors declare that there is no conflict of interests regarding the publication of this paper.

\section*{Acknowledgements}
This paper has been partially sponsored by the Project PIP No. 0534 from CONICET-UA (Rosario, Argentina) and AFOSR-SOARD Grant FA 9550-14-1-0122.
\bibliographystyle{plain}
\bibliography{References}

\appendix
\section*{Appendix: Summary of restrictions on data, definition of functions and equations for $\xi$.}
\subsection*{List of restrictions on data}
\begin{align}
&0<\frac{q_0}{\rho l\sigma}\exp{(-\sigma^2/\alpha)}-1\tag{R1}\hspace{5.5cm}\\
&0<1-\frac{q_0}{h_0D_\infty}\tag{R2}\hspace{5.5cm}\\
&0<\frac{q_0}{\rho l \sigma}-1\tag{R3}\hspace{5.5cm}\\
&1-\frac{q_0}{h_0D_\infty}<
\frac{q_0\sqrt{\pi}}{\sigma\rho cD_\infty}
g_4\left(\sqrt{\ln{\left(\frac{q_0}{\rho l\sigma}\right)}}\right)\tag{R4}\hspace{5.5cm}\\
&f_4(\eta)>1\tag{R5}\hspace{5.5cm}
\end{align}
\hspace*{3.5cm} where $\eta$ is the only positive solution to the equation:
\begin{equation*}
\frac{q_0}{\rho l\sigma}(1-2x^2)=(1-x^2)\exp(x^2)\text{,}\hspace{0.5cm}x>0\hspace{2.4cm}
\end{equation*}
\begin{align}
&1-\frac{q_0}{h_0D_\infty}<\frac{q_0\sqrt{\pi}}{\sigma\rho cD_\infty}g_4(\zeta_1)
\hspace{1cm}\text{or}\hspace{1cm}
1-\frac{q_0}{h_0D_\infty}>\frac{q_0\sqrt{\pi}}{\sigma\rho cD_\infty}g_4(\zeta_2)\tag{R6}
\end{align}
\hspace*{3.5cm} where $\zeta_1$ and $\zeta_2$ are the only two positive solutions to the equation:\\
\begin{equation*}
f_4(x)=1\text{,}\hspace{0.5cm}x>0\hspace{6.5cm}
\end{equation*}
\begin{align*}
&f_4(\eta)=1\tag{R7}\hspace{10.3cm}
\end{align*}
\hspace*{3.5cm} where $\eta$ is the only positive solution to the equation (\ref{eq:eta4-DC}).
\begin{align}
&1-\frac{q_0}{h_0D_\infty}\neq\frac{q_0\sqrt{\pi}}{\sigma\rho cD_\infty}g_4(\eta)\tag{R8}\hspace{7.1cm}
\end{align}
\hspace{3.5cm}where $\eta$ is the only positive solution to the equation (\ref{eq:eta4-DC}).
\begin{align}
&f_4(\eta)<1\hspace{3.3cm}\tag{R9}\hspace{7.1cm}
\end{align}
\hspace{3.5cm}where $\eta$ is the only positive solution to the equation (\ref{eq:eta4-DC}).
\begin{align}
&\frac{q_0\sqrt{\pi}}{\sigma\rho cD_\infty}
g_4\left(\sqrt{\ln\left(\frac{1}{\nu_4}\right)}\right)<
1-\frac{q_0}{h_0D_\infty}<
\frac{q_0\sqrt{\pi}}{\sigma\rho cD_\infty}
g_4\left(\sqrt{\ln{\left(\frac{q_0}{\rho l\sigma}\right)}}\right)\tag{R10}
\end{align}
\hspace*{3.5cm} where:
\begin{equation*}
\nu_4=\frac{\rho l\sigma}{2q_0}\ln{\left(\frac{q_0}{\rho l\sigma}\right)}\left[1+\sqrt{1+\frac{2\gamma c}{l\ln\left(\frac{q_0}{\rho l\sigma}\right)}}\right]
\end{equation*}
\begin{align}
&0<\frac{2q_0}{\rho\gamma c\sigma}\ln{\left(\frac{q_0}{\rho l\sigma}\right)}\left(\frac{q_0}{\rho l\sigma}-1\right)-1\tag{R11}\hspace{5.9cm}\\
&\frac{q_0\sigma\sqrt{\pi}}{kD_\infty}g_5(\zeta_1)<
1-\frac{q_0}{h_0D_\infty}<
\min\left\{\frac{2q_0\sigma}{kD_\infty},\frac{q_0\sigma\sqrt{\pi}}{kD_\infty}g_5(\zeta_2)\right\}\tag{R12}\hspace{2.1cm}
\end{align}
\hspace*{3.5cm} where $\zeta_1$ and $\zeta_2$ are, respectively, the unique only solutions to equations:
\begin{align*}
&\frac{q_0\sigma c}{lk}\exp{(-x^2)}=x^2\text{,}\hspace{0.5cm}x>0\hspace{1cm}\\
&\frac{q_0\sigma c}{lk}\exp{(-x^2)}=\left[\frac{\gamma k}{2q_0\sigma}\exp{(x^2)}+1\right]x^2\text{,}\hspace{0.5cm}x>0\hspace{1cm}
\end{align*}
\begin{align}
&\frac{q_0\sigma\sqrt{\pi}}{kD_\infty}g_5\left(\sqrt{\ln{\left(\frac{q_0}{\rho l\sigma}\right)}}\right)<
1-\frac{q_0}{h_0D_\infty}\tag{R13}\hspace{3cm}\\
&\frac{q_0}{\rho l\sigma}\geq\frac{\gamma k}{2q_0\sigma}+1\tag{R14}\hspace{3cm}\\
&1-\frac{q_0}{h_0D_\infty}<
\min\left\{\frac{2q_0\sigma}{kD_\infty},\frac{q_0\sigma\sqrt{\pi}}{kD_\infty}g_5\left(\sqrt{\ln{\left(\frac{1}{\nu_6}\right)}}\right)
\right\}\tag{R15}\hspace{3cm}
\end{align}
\hspace*{3.5cm} where:
\begin{equation*}
\nu_6=\frac{\rho l\sigma}{2q_0}\left[1+\sqrt{1+\frac{2\gamma k}{\sigma^2\rho l}}\right]
\end{equation*}
\begin{align}
&1<\frac{q_0}{\rho l\sigma}<\frac{\gamma k}{2q_0\sigma}+1\tag{R16}\hspace{7cm}\\
&1-\frac{q_0}{h_0D_\infty}<
\frac{2q_0\sigma}{kD_\infty}\tag{R17}\hspace{7cm}\\
&g_5(\eta)<\frac{kD_\infty}{q_0\sigma\sqrt{\pi}}\left(1-\frac{q_0}{h_0D_\infty}\right)\tag{R18}\hspace{7cm}
\end{align}
\hspace*{3.5cm} where $\eta$ is the only positive solution to the equation:
\begin{equation*}
\frac{q_0c\sigma}{lk}\frac{exp{(-x^2)}}{x}=1\text{,}\hspace{0.5cm}x>0\hspace{4.7cm}
\end{equation*}
\begin{align}
&g_{14}(\eta)>h_{14}(\eta)\tag{R19}\hspace{9.5cm}
\end{align}
\hspace*{3.5cm} where $\eta$ is the only positive solution to the equation:
\begin{equation*}
\frac{q_0}{\rho l\sigma}(1-2x^2)=\exp{(x^2)}\text{,}\hspace{0.5cm}x>0\hspace{4cm}
\end{equation*}

\subsection*{List of definitions of functions ($x>0$)}
\begin{align*}
&f_4(x)=\frac{2q_0}{\gamma\rho c\sigma}\left(\frac{q_0}{\rho l\sigma}\exp{(-x^2)}-1\right)x^2\exp{(-x^2)}\text{,}
\hspace{0.8cm}
g_4(x)=x\erf{(x)}\text{,}\\
&f_5(x)=\frac{2q_0\sigma}{\gamma k}\left(\frac{q_0 c\sigma}{lk}\,\,\frac{\exp{(-x^2)}}{x^2}-1\right)\exp{(-x^2)}\text{,}
\hspace{1cm}
g_5(x)=\frac{\erf{(x)}}{x}\text{,}\\
&f_6(x)=\frac{2q_0\sigma}{\gamma k}\left(\frac{q_0}{\rho l\sigma}\exp{(-x^2)}\right)\exp{(-x^2)}\text{,}\\
&g_{13}(x)=\frac{\exp{(-x^2)}}{\erf{(x)}}\text{,}
\hspace{5.1cm}
h_{13}(x)=x+b_{13}\exp{(x^2)}\erf{(x)}\text{,}
\end{align*}
\hspace*{0.1cm} with:
\begin{equation*}
a_{13}=\frac{2cD_\infty}{l\sqrt{\pi}}\left(1-\frac{q_0}{h_0D_\infty}\right)^2\text{,}\hspace{1cm}
b_{13}=\frac{\gamma\sqrt{\pi}(1-\epsilon)}{2D_\infty\left(1-\frac{q_0}{h_0D_\infty}\right)}\text{,}
c_{13}=2\left(1-\frac{q_0}{h_0D_\infty}\right)\text{,}
\end{equation*}
\begin{align*}
&g_{14}(x)=\left(\frac{q_0}{\rho l\sigma}\exp{(-x^2)}-1\right)x\text{,}
\hspace{3.1cm}
h_{14}(x)=\erf{(x)}\exp{(x^2)}\text{,}\hspace{1.7cm}
\end{align*}
\hspace*{0.1cm} with:
\begin{equation*}
a_{14}=\frac{2D_\infty}{\gamma\sqrt{\pi}(1-\epsilon)}\left(1-\frac{q_0}{h_0D_\infty}\right)\text{.}
\end{equation*}

\subsection*{List of equations for $\xi$}
\begin{align}
&g_4(x)=\frac{\sigma\rho cD_\infty}{q_0\sqrt{\pi}}\left(1-\frac{q_0}{h_0D_\infty}\right)\tag{E4}\\
&g_5(x)=\frac{kD_\infty}{q_0\sigma\sqrt{\pi}}\left(1-\frac{q_0}{h_0D_\infty}\right)\tag{E5}\\
&g_{13}(x)=\frac{a_{13}}{c_{13}}h_{13}(x)\tag{E13}\\
&a_{14}g_{14}(x)=h_{14}(x)\tag{E14}
\end{align}

\end{document}